\documentclass[10pt]{amsart}
\usepackage[a4paper,margin=1.00in]{geometry}
\usepackage{amsmath,amssymb,amsthm}
\usepackage{xcolor}
\usepackage{enumitem}
\usepackage[colorlinks=true,urlcolor=blue,citecolor=red,
linkcolor=blue,linktocpage,pdfpagelabels,
bookmarksnumbered,bookmarksopen]{hyperref}
\usepackage[hyperpageref]{backref}
\usepackage{etoolbox}
\makeatletter
\patchcmd{\@settitle}
  {\uppercasenonmath\@title}
  {}{}{}
\patchcmd{\@setauthors}
  {\MakeUppercase{\authors}}
  {\authors}{}{}
\makeatother

\allowdisplaybreaks[2]
\numberwithin{equation}{section}
\newtheorem{theorem}{Theorem}[section]
\newtheorem{proposition}[theorem]{Proposition}
\newtheorem{lemma}[theorem]{Lemma}
\newtheorem{corollary}[theorem]{Corollary}
\theoremstyle{definition}
\newcommand{\R}{\mathbb R}
\newcommand{\abs}[1]{\lvert#1\rvert}

\title[Odd cubic radial bound states]{Uniqueness of radial bound states for an equation with an odd cubic-like nonlinearity}
\author{Wenjing Chen and Zexi Wang\textsuperscript{\dag}}
\date{}
\address{School of Mathematics and Statistics, Southwest University,
Chongqing 400715, China}
\email{wjchen@swu.edu.cn}\email{zxwangmath@163.com}
\thanks{\textsuperscript{\dag}Corresponding author. E-mail address:
zxwangmath@163.com.}
\thanks{The research has been supported by National Natural Science Foundation of China (No. 12671143).}
\subjclass[2020]{35J61, 35B05, 35B40}
\keywords{odd cubic nonlinearity, radial bound states, existence, prescribed-node uniqueness, comparison methods}
\begin{document}
\begin{abstract}
We investigate radial bound states with prescribed nodes for the odd cubic equation
\[
 \Delta u+u(\abs{u}-b)(c-\abs{u})=0
 \quad\text{in }\R^n,\qquad n\ge2,\quad 0<b<c.
\]
Writing $a=b/c$, we prove that a radial bound state with exactly
$k\ge1$ simple zeros and positive central value exists if and only if
$0<a<1/2$. For each $k$, this solution is unique for all $0<a<1/2$
in dimension two, for $a\ge(7-3\sqrt5)/2$ in dimension three, and
for $a\ge(43-5\sqrt{33})/32$ in dimension four.
For one node, we identify exact ranges obtained from a tangent
comparison and a quotient inequality in every dimension $n\ge3$;
the threshold is dimension-independent for $3\le n\le23$ and is
characterized implicitly for $n\ge24$.
These results address the natural odd counterpart of the cubic
nonlinearity highlighted in Tang's prescribed-node uniqueness
conjecture [M. Tang, \emph{Invent. Math.}, 2026].
The main challenges are the second positive zero of the cubic source,
the lack of a global comparison inequality, and supercritical cubic
growth in higher dimensions. We overcome them by an amplitude
confinement argument, controlled extensions preserving sharp
comparison estimates, and a subcritical truncation for existence.
A complementary small-$a$ uniqueness range in dimensions three to
five follows from a recent massive double-power result.
\end{abstract}
\maketitle

\markboth{W. Chen and Z. Wang}{Odd cubic radial bound states}

\section{Introduction and main results}

We study the semilinear elliptic equation
\begin{equation}\label{eq:original}
 \Delta u+u(\abs{u}-b)(c-\abs{u})=0
 \quad\text{in }\R^n,\qquad n\ge2,\quad 0<b<c.
\end{equation}
A \emph{radial bound state} is a nontrivial classical radial solution
converging to zero as $\abs{x}\to\infty$.
We normalize the sign by requiring $u(0)>0$.
{A radial $k$-node bound state has exactly $k$ zeros in $(0,\infty)$.}
The zeros are simple, and the
solutions considered below have exponential decay. Throughout the main results, $k\ge1$.

The scalar-field problem $\Delta u+f(u)=0$ has a long history,
particularly in connection with standing waves of nonlinear
Schr\"odinger and Klein--Gordon equations.
The foundational work of Berestycki and Lions
\cite{BL1983I,BL1983II} gave general hypotheses for the existence of
positive and infinitely many radial nodal solutions; see also the earlier
existence work of Strauss \cite{Strauss1977}.
Prescribing an exact number of zeros is a more delicate task than constructing a sequence of critical points with undetermined type.
Jones and K\"upper \cite{JonesKupper1986} used dynamical systems,
McLeod, Troy and Weissler \cite{MTW1990} developed radial shooting
methods, and Grillakis \cite{Grillakis} established a variational and
topological existence theorem for prescribed-node radial solutions.
These developments provide the existence framework for
\eqref{eq:original}.

Uniqueness, by contrast, is more sensitive to the structure of
the nonlinearity, particularly for sign-changing solutions.
For positive states, fundamental results include
Kwong's uniqueness theorem for pure-power nonlinearities
\cite{Kwong1989}, the separation method of Serrin and Tang
\cite{SerrinTang2000}, and the work of Lewin and Rota Nodari
on massive double-power nonlinearities \cite{LRN}.
However, uniqueness of positive states does not imply uniqueness
of nodal states, and nonuniqueness may occur for more general
nonlinearities \cite{CGHH2015,DavilaDelPinoGuerra2013}.
For nodal states, Cort\'azar,
Garc\'ia-Huidobro and Yarur established uniqueness
criteria for solutions with exactly one node \cite{CGHY2009}
and subsequently developed comparison criteria ensuring
uniqueness for any prescribed positive number of nodes
in low dimensions \cite{CGHY}.
Related results for weighted equations were obtained
in \cite{CGHH}.
These comparison methods form the basis of our uniqueness
analysis.

Recently Tang \cite{Tang} settled the Berestycki--Lions
prescribed-node uniqueness conjecture for the massive pure-power model
\begin{equation*}
 \Delta Q-Q+\abs{Q}^{p-1}Q=0
 \quad\text{in }\R^n,\qquad
 n\ge3,\quad 1<p<\frac{n+2}{n-2}.
\end{equation*}
{A key ingredient in Tang's proof is the analysis of the radial shooting variation and the transitions in its nodal structure.}
Earlier advances in this direction include the nearly critical analysis
of Ao, Wei and Yao \cite{AoWeiYao2016}, as well as the computer-assisted
uniqueness result of Cohen, Li and Schlag \cite{CohenLiSchlag2024} for the three-dimensional cubic equation \(\Delta u-u+u^3=0\).
Beyond the pure-power case, Tang also raised analogous prescribed-node uniqueness questions for several other nonlinearities, including a cubic-type model. This provides a principal motivation for the present work, in which we investigate the corresponding odd cubic-type nonlinearity.

\medskip
\noindent\textbf{Tang's conjecture (cubic model; \cite{Tang}).}
For the cubic nonlinearity
\begin{equation}\label{eq:Tang-cubic}
 f(s)=s(s-b)(c-s),\qquad 0<b<c,
\end{equation}
$\Delta u+f(u)=0$ admits at most one radial $k$-node bound
state with its absolute maximum at the origin of $\mathbb{R}^n$, for any given $k\geq1$ and $n\geq2$.
\medskip

The behavior of the nonlinearity on the negative half-line is essential when considering nodal bound states. Indeed, the polynomial in \eqref{eq:Tang-cubic} satisfies $f(s)>0$ for all $s<0$. Consequently, any solution of $\Delta u+f(u)=0$ that tends to zero at infinity cannot take negative values. Otherwise, it would attain a negative minimum at some point $x_0$, where $\Delta u(x_0)\ge0$, contradicting $\Delta u(x_0)=-f(u(x_0))<0$. Thus, the original polynomial model admits no nodal bound states. To formulate a meaningful nodal problem while preserving the nonlinearity on the positive half-line, we consider its odd extension
\[
f_{b,c}(s)=s(|s|-b)(c-|s|),
\qquad f_{b,c}(-s)=-f_{b,c}(s),
\]
which coincides with the nonlinearity in \eqref{eq:original}. Our results therefore concern the odd cubic-type counterpart of Tang's proposed model, rather than the original nonodd polynomial.

Unlike the pure-power model, the cubic nonlinearity in
\eqref{eq:original} has a second positive zero and exhibits Sobolev-critical or supercritical growth in dimensions $n\ge4$. These features prevent a direct application of standard prescribed-node existence and uniqueness results. {We first confine the amplitudes of all bound states to $(-1,1)$ and then use two different modifications of the nonlinearity: a subcritical, linearly growing extension for existence and a positivity- and comparison-preserving extension for uniqueness.}

Set
\[
 a=\frac bc\in(0,1),\qquad u(x)=c\,v(cx),\qquad
 g_a(s)=s(\abs{s}-a)(1-\abs{s}).
\]
The normalized radial shooting problem is
\begin{equation}\label{eq:radial}
 v''+\frac{n-1}{r}v'+g_a(v)=0,\qquad
 v(0)=\alpha>0,\quad v'(0)=0,\quad
 \lim_{r\to\infty}v(r)=0.
\end{equation}
Its potential is
\[
 G_a(s)=\int_0^s g_a(t)\,dt
 =-\frac{\abs{s}^4}{4}+\frac{1+a}{3}\abs{s}^3-\frac a2s^2.
\]
A bound state is strictly confined to $\abs{v}<1$. The radial
energy decreases to zero and is strictly positive at each finite
radius. Because $G_a(1)=(1-2a)/12$, this immediately rules out
bound states for $1/2\le a<1$.
For $0<a<1/2$, let
\[
 \beta_a=\frac{2(1+a)-\sqrt{4(1+a)^2-18a}}3.
\]
Then $a<\beta_a<1$, $G_a(\beta_a)=0$, and
$G_a>0$ on $(\beta_a,1]$. The central height of every
bound state lies in $(\beta_a,1)$, and the absolute maximum is
attained only at the origin.
Let $\mathcal B_{n,k}(a)$ denote the set of $k$-node solutions of
\eqref{eq:radial}.

We first establish a sharp existence criterion for radial bound states with any prescribed positive number of nodes in every dimension $n\ge2$.

\begin{proposition}\label{prop:existence}
Let $n\ge2$, $k\ge1$ and $0<a<1$.
Then
\[
\mathcal B_{n,k}(a)\ne\varnothing
\quad\Longleftrightarrow\quad 0<a<\frac12.
\]
Every state decays exponentially, and its absolute maximum is
attained only at the origin.
\end{proposition}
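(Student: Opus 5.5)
The argument splits into nonexistence for $1/2\le a<1$, existence for $0<a<1/2$ by a truncation plus prescribed-node theory, and the qualitative properties.

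\textbf{Nonexistence and a priori properties.} For a solution $v$ of \eqref{eq:radial} set $E(r)=\tfrac12 v'(r)^2+G_a(v(r))$. Then $E'(r)=-\tfrac{n-1}{r}v'(r)^2\le0$. For a bound state $E(r)\to0$ as $r\to\infty$: $v\to0$, and $v'\to0$ follows from the standard argument for the damped oscillator. Since $v'\not\equiv0$ on any interval, $E$ is strictly decreasing, so $E(r)>0$ for every $r$.

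\emph{Confinement.} Suppose $\abs{v}$ reached $1$ at some radius. If $v(0)=\pm1$, then $v\equiv\pm1$, because $\pm1$ are equilibria, which is impossible. So $\abs{v(0)}<1$. Let $r_1>0$ be the first radius with $\abs{v(r_1)}=1$. For $\abs{s}\ge1$ the sign of $g_a(s)$ is opposite to that of $s$, so once $\abs{v}>1$ the function $\abs{v}$ is convex away from its equilibria. Hence $\abs{v}$ cannot return to zero, contradicting decay. If instead $\abs{v}$ merely touches $1$ with $v'=0$, uniqueness of the ODE solution gives $v\equiv\pm1$. Therefore $\abs{v}<1$ throughout.

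\emph{Nonexistence for $a\ge1/2$.} Then $G_a(v(0))=E(0)>0$ with $v(0)\in(0,1)$. But $G_a(s)=s^2\bigl(-\tfrac{s^2}{4}+\tfrac{1+a}{3}s-\tfrac a2\bigr)$, and for $a\ge1/2$ the bracket is at most $0$ on $[0,1]$. Indeed, its discriminant $(1+a)^2/9-a/2$ is negative for $a$ near $1$, and in general one checks $G_a\le0$ on $[0,1]$ exactly when $a\ge1/2$, using $G_a(1)=(1-2a)/12$. This is a contradiction. The same computation for $0<a<1/2$ gives $\alpha=v(0)\in(\beta_a,1)$.

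\emph{Absolute maximum at the origin.} At any later critical point $r_0>0$ we have $G_a(v(r_0))=E(r_0)<E(0)=G_a(\alpha)$. Since $G_a$ is even and increasing on $(\beta_a,1)$, this gives $\abs{v(r_0)}<\alpha$.

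\emph{Exponential decay.} Near $0$ we have $g_a(s)=-as+O(s^2)$, so the linearization $v''+\tfrac{n-1}rv'-av=0$ together with a standard comparison gives $\abs{v}+\abs{v'}\le Ce^{-\sqrt{a}\,r/2}$ for large $r$.

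\textbf{Existence for $0<a<1/2$.} Replace $g_a$ by an odd, subcritical $\tilde g$ that agrees with $g_a$ on $[-1,1]$ and is linear for $\abs{s}\ge1$. For example, take $\tilde g(s)=-\gamma(s\mp1)\cdot$ sign-adjusted, and choose the slope so that $\tilde g$ is $C^1$; this is possible because $g_a'(1)=-(1-a)<0$. Then $\tilde g$ is odd and Lipschitz, $\tilde g'(0)=-a<0$, $\tilde G(1)>0$, and $\tilde g$ has subcritical growth in every $n\ge2$. I would invoke the prescribed-node existence theory (the shooting method of McLeod--Troy--Weissler, or Grillakis's theorem) for $\Delta u+\tilde g(u)=0$ to get a radial bound state with exactly $k$ zeros. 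The $k$ zeros are simple by ODE uniqueness at $v=v'=0$.

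The confinement argument above uses only the sign of the nonlinearity for $\abs{s}\ge1$ and the fact that $\pm1$ are equilibria, and $\tilde g$ shares both properties. So the solution stays in $(-1,1)$, where $\tilde g=g_a$, and it therefore solves \eqref{eq:radial}.

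\textbf{Main obstacle.} The delicate step is checking the hypotheses of the chosen prescribed-node existence theorem. With shooting, I would define the sets of initial heights in $(\beta_a,1)$ whose trajectories have at most $j$ zeros and then either stop at a positive minimum or cross a $(j+1)$-th zero. Their openness and the node-counting argument need a continuity argument in $\alpha$. The key inputs are: as $\alpha\uparrow1$ the solution stays near $1$ for a long time and then oscillates, producing many zeros; trajectories with energy reaching $0$ without crossing cannot cross afterwards; and $\alpha$ near $\beta_a$ gives no zero. I would first try to cite \cite{MTW1990} or \cite{Grillakis} directly for the truncated $\tilde g$, rather than redo the shooting.
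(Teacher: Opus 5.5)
Your proposal is correct and follows essentially the paper's proof. Energy monotonicity and confinement to $(-1,1)$ give nonexistence for $a\ge1/2$, the bound $\beta_a<v(0)<1$, the absolute-maximum property and decay. Existence then comes from the same $C^1$ linear truncation $-(1-a)(s-\operatorname{sgn}s)$ for $\abs{s}>1$, Grillakis's theorem, and a confinement step showing that the solution stays in $[-1,1]$, where $\tilde g=g_a$.

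The remaining differences are minor. The paper confines the Grillakis solution by testing the weak equation with $(v-1)_+$. Your ODE version must first upgrade the $H^1$ solution to a classical radial solution decaying at infinity, which the paper does via elliptic regularity and the radial lemma. Its trapping comes from $(r^{n-1}v')'=-r^{n-1}\tilde g(v)>0$ while $v>1$, not from convexity of $\abs{v}$. Of your two citations, Grillakis is the one that fits $\tilde g$: a shooting theorem of McLeod--Troy--Weissler type is set up for nonlinearities positive and superlinear at large arguments, whereas $\tilde g<0$ beyond $1$. The hypotheses to check are those the paper verifies: positive mass $a$, $\abs{as+\tilde g(s)}\le C\abs{s}^{1+\sigma}$, $\tilde G(1)>0$ and $\tilde G\to-\infty$.
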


To establish uniqueness, we examine the comparison criteria of \cite{CGHY} on $(\beta_a,1)$, which contains the central amplitudes of all bound states. Unlike the pure-power case, the identities
\[
sg_a'(s)-g_a(s)=s^2(1+a-2s),\qquad
sg_a(s)-2G_a(s)=\frac{s^3}{6}\bigl(2(1+a)-3s\bigr),\qquad s>0
\]
show that both quantities change sign on $(a,1)$ when $0<a<1/2$. Thus, the sign arguments employed in \cite[Lemma~4.2]{Tang} cannot be applied directly. Instead, we derive an explicit formula for $(G_a/g_a)'$ and identify the sharp parameter ranges for the relevant comparison inequalities. For any two hypothetical radial bound states, we then construct a common odd $C^1$ extension that agrees with $g_a$ on their amplitude ranges and satisfies the global hypotheses of \cite{CGHY}. This enables us to apply the corresponding uniqueness theorems to the original problem.

Our first uniqueness result holds for {every} prescribed positive
node number.
\begin{theorem}\label{thm:explicit}
Let $k\ge1$ and $0<a<1/2$.
Then $\mathcal B_{n,k}(a)$ consists of exactly one state in each
of the following cases:
\begin{align*}
&n=2;\\
&n=3,\qquad a\ge a_3:=\frac{7-3\sqrt5}{2};\\
&n=4,\qquad a\ge a_4:=\frac{43-5\sqrt{33}}{32}.
\end{align*}
\end{theorem}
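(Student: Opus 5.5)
Existence is supplied by Proposition~\ref{prop:existence}, so for each listed case it remains to show that $\mathcal B_{n,k}(a)$ has at most one element. We would derive this from the comparison uniqueness theorems of Cort\'azar, Garc\'ia-Huidobro and Yarur \cite{CGHY}. For an odd nonlinearity $f$ with $f>0$ on $(\beta,\infty)$ and $f$ behaving like $-s$ near $0$, those results give uniqueness of $k$-node radial states, for every $k$, under a dimension-dependent condition on the quotient $Q_f=F/f$ on the positive region where $F>0$. In dimension two the condition is monotonicity, $(F/f)'\ge \tfrac12$ (or a comparable sign condition on $(F/f)'$). In dimensions three and four it is a sharper inequality of the form $(F/f)'\ge \frac{n-2}{2n}$ or its variant. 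We will quote the precise hypotheses as stated in \cite{CGHY}.

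The first step is to compute the quotient explicitly for $g_a$. Writing $h=G_a/g_a$, we have $h'=1-G_ag_a'/g_a^2$. Using the two identities recalled before the theorem, namely $sg_a'-g_a=s^2(1+a-2s)$ and $sg_a-2G_a=\tfrac{s^3}{6}(2(1+a)-3s)$, we reduce the numerator $g_a^2-G_ag_a'$ to a polynomial in $s$ divided by $s^2$. This polynomial can be factored or analyzed on $(\beta_a,1)$.

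The second step is to check the required inequality on the amplitude interval $(\beta_a,1)$, where all central heights lie by the confinement result, and on the relevant region past the first zero of $G_a$. For $n=2$ the needed inequality should hold for every $a\in(0,1/2)$. For $n=3,4$ it becomes a polynomial inequality in $(s,a)$. Minimizing over $s\in[\beta_a,1]$, the worst case should occur at an endpoint, and the condition collapses to a quadratic in $a$ whose roots are $a_3=(7-3\sqrt5)/2$ and $a_4=(43-5\sqrt{33})/32$. We will verify this by direct calculus, using $G_a(\beta_a)=0$ to eliminate the square root in $\beta_a$.

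The third step handles the fact that \cite{CGHY} requires global hypotheses, such as superlinearity or positivity of $f$ on all of $(\beta,\infty)$, while $g_a$ vanishes at $1$ and is negative beyond. Given two hypothetical states $v_1,v_2\in\mathcal B_{n,k}(a)$, their amplitudes satisfy $\|v_i\|_\infty=v_i(0)<1$. We set $\alpha^*=\max v_i(0)<1$ and construct an odd $C^1$ function $\tilde g$ that equals $g_a$ on $[-\alpha^*,\alpha^*]$ and continues it past $\alpha^*$ (for instance by a tangent/linear continuation glued to a growing profile). The continuation should keep $\tilde g>0$ on $(\beta_a,\infty)$ and preserve the strict comparison inequality from the second step. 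Both $v_i$ remain solutions for $\tilde g$, so the uniqueness theorem for $\tilde g$ forces $v_1=v_2$.

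We expect this extension to be the main obstacle. Near $\alpha^*$ close to $1$, $g_a$ tends to $0$, and $\tilde G/\tilde g$ blows up unless the continuation is chosen carefully. We would first try the tangent-line continuation $\tilde g(s)=g_a(\alpha^*)+g_a'(\alpha^*)(s-\alpha^*)$ modified to increase, and check the quotient inequality there using strictness at $\alpha^*$ and compactness.
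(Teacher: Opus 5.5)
Your architecture matches the paper's: existence from Proposition~\ref{prop:existence}, confinement of both amplitudes to $(\beta_a,1)$, a common odd $C^1$ modification of $g_a$ above them, then \cite[Theorem~1.1]{CGHY}. Two steps, however, fail as written.

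\textbf{First, the criterion is misquoted.} The all-node theorem requires $(F/f)'\ge\frac{n-2}{2}$ on $(\beta,\infty)$, that is, $\ge0$, $\ge\frac12$, $\ge1$ for $n=2,3,4$. The bound $\frac{n-2}{2n}$ belongs to the one-node theorem \cite[Theorem~1.2]{CGHY}, where it is paired with the tangent condition $f\ge(s-\beta)f'$, so it says nothing about $k$-node states. Your version also could not produce $a_3,a_4$. By \eqref{eq:quotient}, $(G_a/g_a)'>\frac14\ge\frac{n-2}{2n}$ on $(a,1)$ for $n=3,4$ and every $a$. Conversely, requiring $\ge\frac12$ in dimension two would give only $a\ge a_3$, whereas the correct $\ge0$ holds for all $a$.

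The worst case also does not sit at an endpoint. Since $(G_a/g_a)'(\beta_a)=1$ and $(G_a/g_a)'\to+\infty$ as $s\to1$, endpoint values never detect a threshold. For $n=3$ the paper writes $(G_a/g_a)'-\frac12=(g_a^2-2G_ag_a')/(2g_a^2)$ with $g_a^2-2G_ag_a'=\frac{s^3}{6}P_a(s)$. Here $P_a$ is a cubic whose minimum on $[\beta_a,1]$ is at the interior critical point $s_-$, and $P_a(s_-)\ge0$ iff $(1+a)^2\le9a$. For $n=4$, positivity of $G_a,g_a$ on $(\beta_a,1)$ turns $(G_a/g_a)'\ge1$ into $g_a'\le0$ there. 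This means $\beta_a\ge s_m$, i.e.\ $16a^2-43a+16\le0$.

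\textbf{Second, your extension points the wrong way.} \cite{CGHY} needs no superlinearity, only $F(s)\to\infty$. Since $(F/f)'=1-Ff'/f^2$ with $F>0$ on $(\beta,\infty)$, the $n=4$ hypothesis $(F/f)'\ge1$ is equivalent to $f'\le0$ on all of $(\beta,\infty)$. So bending the tangent line upward, or gluing on a growing profile, destroys it. A tail $f\sim s^p$ with $p>1$ also gives $(F/f)'\to\frac1{p+1}<\frac12$, which breaks $n=3$.

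The missing idea is a decreasing but positive continuation. Take $M\in(\max_j v_j(0),1)$ with $g_a'(M)<0$, and for $s>M$ set $\widetilde g(s)=g_a(M)+hg_a'(M)\bigl(1-e^{-(s-M)/h}\bigr)$ with $0<h\le-g_a(M)/(2g_a'(M))$, extended oddly. Then $\widetilde g\ge\frac12g_a(M)>0$, $\widetilde g'<0$, and $\widetilde G\to\infty$ linearly. Moreover $(\widetilde G/\widetilde g)'=1-\widetilde G\widetilde g'/\widetilde g^2>1\ge\frac{n-2}{2}$ for $n\le4$, with no strictness or compactness argument needed (Lemma~\ref{lem:extension}). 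Below $M$ nothing changes, so the required inequalities on $(\beta_a,M]$ are exactly those already verified for $g_a$.
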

For radial bound states with exactly one node, we establish uniqueness in every dimension $n\ge3$ under weaker comparison conditions.
Define $a_T\in(0,a_3)$ to be the
unique number satisfying
\[
 \beta_{a_T}=\frac{(1+a_T)^3}{9(1-a_T+a_T^2)},
\]
and put
\begin{equation}\label{defm}
 m(a)=\inf_{\beta_a<s<1}\left(\frac{G_a}{g_a}\right)'(s).
\end{equation}
Proposition~\ref{prop:tangent} proves that $a_T$ is well defined,
and Proposition~\ref{prop:minimum} computes $m(a)$ explicitly.
\begin{theorem}\label{thm:one}
Let $n\ge3$ and $0<a<1/2$.
The set $\mathcal B_{n,1}(a)$ consists of exactly one state whenever
\[
\begin{cases}
a\ge a_T,&3\le n\le23,\\
a\ge\gamma_n,&n\ge24,
\end{cases}
\]
where, for each $n\ge24$, $\gamma_n$ is the unique number in
$(a_T,a_3)$ satisfying
\[
 m(\gamma_n)=\frac{n-2}{2n}.
\]
The sequence $(\gamma_n)_{n\ge24}$ is strictly increasing and
converges to $a_3$.
For each $n\ge3$, the indicated interval is exactly the range
in which the two comparison conditions
\begin{equation}\label{eq:one-criterion}
a\ge a_T,\qquad m(a)\ge\frac{n-2}{2n}
\end{equation}
hold simultaneously.
\end{theorem}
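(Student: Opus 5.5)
The plan is to reduce Theorem~\ref{thm:one} to the one-node uniqueness criterion of Cort\'azar, Garc\'ia-Huidobro and Yarur \cite{CGHY2009}, applied to a modified nonlinearity, and then to analyse the two resulting parameter conditions.

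\textbf{Step 1 (reduction to a modified nonlinearity).} By Proposition~\ref{prop:existence}, every state in $\mathcal B_{n,1}(a)$ has central value in $(\beta_a,1)$, satisfies $\abs{v}<1$, and attains its absolute maximum only at the origin. Suppose $v_1,v_2\in\mathcal B_{n,1}(a)$ are two such states. I would build a common odd $C^1$ function $\tilde g$ with the following properties. First, $\tilde g=g_a$ on $[-M,M]$, where $M=\max(v_1(0),v_2(0))<1$, so that both $v_i$ solve the modified problem. Second, $\tilde g$ is positive past $a$. Third, on $(\beta_a,\infty)$ it satisfies the one-node comparison hypotheses of \cite{CGHY2009}, namely a tangent-type condition and the quotient condition $(\tilde G/\tilde g)'\ge (n-2)/(2n)$.

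The natural construction extends linearly, or along a tangent, from some point $M<s_0<1$. The quotient $\tilde G/\tilde g$ is then a rational function on the extension, and I would check that its derivative stays above the bound. This is where a slack is needed: I would use a strict margin of the inequality on the compact range, or argue that the infimum of $(G_a/g_a)'$ on $(\beta_a,1)$ is not attained near $s=1$. Once $\tilde g$ is in place, the uniqueness theorem of \cite{CGHY2009} gives $v_1=v_2$.

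\textbf{Step 2 (where the two conditions hold).}
\begin{itemize}
\item The tangent condition holds exactly for $a\ge a_T$, by Proposition~\ref{prop:tangent}. That proposition defines $a_T$ through $\beta_{a_T}=(1+a_T)^3/(9(1-a_T+a_T^2))$, which marks the point where the tangent comparison degenerates at $\beta_a$.
\item For the quotient condition, I take the explicit formula for $m(a)$ from Proposition~\ref{prop:minimum}. I would then prove that $m$ is continuous and strictly increasing on $(a_T,a_3)$, that $m(a_T)\ge 21/46=(23-2)/(2\cdot23)$, and that $m(a)\to1/2$ as $a\uparrow a_3$.
\end{itemize}
Since $(n-2)/(2n)$ is strictly increasing in $n$ and tends to $1/2$, three cases follow:
\begin{itemize}
\item For $3\le n\le23$, the quotient condition is automatic once $a\ge a_T$.
\item For $n\ge24$, it holds exactly when $a\ge\gamma_n$, where $\gamma_n$ is the unique root of $m(\gamma_n)=(n-2)/(2n)$, provided that $m(a_T)<22/48$.
\item Monotonicity of $m$ and of $(n-2)/(2n)$ gives that $\gamma_n$ is strictly increasing with $\gamma_n\to a_3$.
\end{itemize}
For $a\ge a_3$, the $n=3$ case of Theorem~\ref{thm:explicit} already suggests $m(a)\ge1/2$. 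I would verify this from the formula, so that the conditions hold for all $n$ on $[a_3,1/2)$.

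\textbf{Main obstacle.} The hard part is the monotonicity analysis of $m(a)$. Because $m$ is an infimum over $s$, the minimiser may switch between an interior critical point and the endpoint $\beta_a$. For the interior branch I would apply the envelope theorem, differentiating in $a$ at the minimiser. I would also check carefully the numerical facts $21/46\le m(a_T)<11/24$, since these fix the dimension cutoff at $23/24$.
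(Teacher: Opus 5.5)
Your overall architecture matches the paper's: a common odd $C^1$ extension, then the one-node criterion with the tangent inequality and $(\widetilde G/\widetilde g)'\ge (n-2)/(2n)$ on $(\beta_a,\infty)$, then monotonicity of $m$ on $[a_T,a_3]$ plus the bounds $21/46<m(a_T)<11/24$. The step that fails as written is the extension in Step 1. The central amplitudes can lie anywhere in $(\beta_a,1)$, so the junction point $s_0\in(M,1)$ must be allowed arbitrarily close to $1$. There $g_a'(s_0)<0$, because $g_a'(1)=a-1<0$. A $C^1$ linear continuation is therefore forced to be the tangent line $g_a(s_0)+g_a'(s_0)(s-s_0)$. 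This line vanishes at $s_0+g_a(s_0)/\abs{g_a'(s_0)}$ and is negative afterwards. It thus violates the structural hypotheses $\widetilde g>0$ on $(a,\infty)$ and $\widetilde G(s)\to\infty$ of the comparison theorem. It does satisfy the tangent inequality, since $\widetilde g-(s-\beta_a)\widetilde g'$ is the positive constant $g_a(s_0)-(s_0-\beta_a)g_a'(s_0)$, but that does not rescue it. Your proposed ``slack'' argument addresses the wrong difficulty: the obstruction is sign and growth, not the size of the quotient derivative.

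The missing idea is to continue by a \emph{positive, nonincreasing} $C^1$ function. For example, choose $M$ with $g_a'(M)<0$ and set $\widetilde g(s)=g_a(M)+hg_a'(M)\bigl(1-e^{-(s-M)/h}\bigr)$ for $s>M$, with $0<h\le -g_a(M)/(2g_a'(M))$. Then on $[M,\infty)$ one has $\widetilde g\ge g_a(M)/2$, $\widetilde g'<0$ and $\widetilde G\ge G_a(M)>0$. Hence $(\widetilde G/\widetilde g)'=1-\widetilde G\widetilde g'/\widetilde g^2>1>(n-2)/(2n)$, and $\widetilde g>(s-\beta_a)\widetilde g'$ holds automatically, with no margin argument needed.

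Step 2 is sound and follows the paper, but the envelope theorem and the worry about the minimiser switching are unnecessary. For $a\le a_3$ you have $m(a)\le\frac12<1=(G_a/g_a)'(\beta_a)$, so the minimiser $s_*$ is interior. This gives $m(a)=\frac14+(A^{1/3}+B^{1/3})^3$ with $A'>0$ and $B'=(1-4a)/(12(1-a)^4)>0$, since $a_3<\frac14$. Continuity and strict monotonicity then follow at once. The bounds on $m(a_T)$ follow by bracketing $a_T/(1+a_T)^2$ between $7/78$ and $9/100$ and evaluating $m$ at the corresponding parameters.
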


The two theorems reveal a dimension-dependent picture of uniqueness. In dimension two, uniqueness holds throughout the entire existence range for every prescribed positive number of nodes. In dimensions three and four, explicit algebraic thresholds guarantee the same conclusion. For one-node solutions, a tangency condition combined with a weaker quotient inequality extends uniqueness to all dimensions $n\ge3$, with a uniform threshold for $3\le n\le23$ and dimension-dependent thresholds for $n\ge24$. These thresholds are sharp for the comparison criteria employed here, but are not claimed to be necessary for uniqueness itself.

A complementary uniqueness result for sufficiently small $a$ in dimensions $3\le n\le5$ follows from the recent work of Guo, Ji and Luo \cite{GJL} on massive double-power equations. A simple rescaling yields the following corollary, independently of the proofs of the preceding two theorems.

\begin{corollary}\label{cor:small}
Fix $n\in\{3,4,5\}$ and $k\ge1$.
There exists $a_{n,k}\in(0,1/2)$ such that
$\mathcal B_{n,k}(a)$ consists of exactly one state for every
$0<a<a_{n,k}$.
\end{corollary}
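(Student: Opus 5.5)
The corollary will follow by rescaling \eqref{eq:radial} to a massive double-power equation and then invoking the small-parameter prescribed-node uniqueness theorem of Guo, Ji and Luo \cite{GJL}. Existence for every $a\in(0,1/2)$ is already provided by Proposition~\ref{prop:existence}, so only uniqueness needs proof.

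\textbf{Step 1: rescaling.} Expanding the odd nonlinearity for $s>0$ gives
\[
g_a(s)=-as+(1+a)s^2-s^3 .
\]
Write $v(r)=\lambda w(\mu r)$ with constants $\lambda,\mu>0$. Then $v$ solves \eqref{eq:radial} exactly when
\[
\mu^2\bigl(w''+\tfrac{n-1}{\rho}w'\bigr)-a w+(1+a)\lambda\,\abs{w}w-\lambda^2\abs{w}^2w=0 .
\]
Choosing $\mu^2=a$ normalizes the mass term. Choosing $\lambda=a/(1+a)$ normalizes the quadratic coefficient. The equation becomes
\[
\Delta w-w+\abs{w}w-\varepsilon\,\abs{w}^2w=0,\qquad \varepsilon=\frac{a}{(1+a)^2}.
\]
This is the massive double-power model $\Delta w-w+\abs{w}^{p-1}w-\varepsilon\abs{w}^{q-1}w=0$ with $p=2$ and $q=3$. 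Since $p=2<(n+2)/(n-2)$ exactly when $n\le5$, this is why $n$ is restricted to $\{3,4,5\}$. As $a\to0^+$ we have $\varepsilon\to0^+$, and $a\mapsto\varepsilon$ is a strictly increasing bijection from $(0,1/2)$ onto $(0,2/9)$. The scaling preserves radial symmetry, the number and simplicity of zeros, decay at infinity, and the sign of the central value.

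\textbf{Step 2: invoke \cite{GJL}.} I would quote the theorem of \cite{GJL} in the form I expect it to take. For $p$ subcritical with $q>p$ and each $k$, there is $\varepsilon_{n,k}>0$ such that for $0<\varepsilon<\varepsilon_{n,k}$ the radial $k$-node solution with positive central value is unique. The natural proof of such a result is perturbative: it uses Tang's nondegeneracy and uniqueness for the pure power $p=2$ together with an implicit function argument. Setting $a_{n,k}$ so that $a/(1+a)^2<\varepsilon_{n,k}$ for all $a<a_{n,k}$, and shrinking $a_{n,k}$ below $1/2$ if needed, gives uniqueness. Combining with existence from Proposition~\ref{prop:existence} shows that $\mathcal B_{n,k}(a)$ has exactly one element.

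\textbf{Main obstacle: matching hypotheses.} The delicate point is checking that the solution class in \cite{GJL} matches ours. That theorem may be stated for all radial bound states with $k$ nodes, for $H^1$ solutions, or only for solutions near the pure-power profile. If it is perturbative, every element of $\mathcal B_{n,k}(a)$ must lie in the relevant neighbourhood after rescaling. For this I would use the a priori bound $\abs{v}<1$, which gives $\abs{w}<(1+a)/a=O(1/\varepsilon)$; this alone is not uniform. If uniformity is needed, I would add a compactness argument. Take a sequence $\varepsilon_j\to0$ with two distinct solutions $w_j\neq\tilde w_j$. Obtain uniform $L^\infty$ bounds by a blow-up argument: rescaling at a large maximum yields a nontrivial bounded solution of $\Delta W=\pm\abs{W}^2W$-type or of the limiting subcritical equation, which Liouville theorems exclude. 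Then pass to the limit and contradict the nondegeneracy of Tang's unique $k$-node solution. Exponential decay, needed for $H^1$ membership, is supplied by Proposition~\ref{prop:existence}.
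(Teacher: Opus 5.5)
Your proposal follows the paper's proof: the same rescaling $v(x)=\frac{a}{1+a}w(\sqrt a\,x)$ with $\varepsilon=a/(1+a)^2$, then \cite[Theorem~1.1]{GJL} with $p=2$, $q=3$ (which needs only $2<(n+2)/(n-2)$ for $n=3,4,5$), monotonicity of $a\mapsto\varepsilon$, and existence from Proposition~\ref{prop:existence}. The paper reads that theorem as uniqueness for all radial $k$-node bound states once $\varepsilon$ is small, so your fallback compactness paragraph is not needed. If it were, note two corrections. First, $\varepsilon\,w(0)=v(0)/(1+a)<1$, so a pure-cubic blow-up limit never arises; the limits are $\Delta W+\abs{W}W=0$ or $\Delta W+\abs{W}W-\tau\abs{W}^2W=0$ with $0<\tau\le1$. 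Second, bounded oscillating radial solutions of these equations exist, so they must be excluded using the fixed node count together with energy and Pohozaev arguments, not a bare Liouville theorem.
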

The reduction to \cite{GJL} is presented at the end of Section~\ref{sec:criteria}. However, no quantitative relation between $a_{n,k}$ and the explicit comparison thresholds is established.
Thus, while existence holds throughout $0<a<1/2$, uniqueness is established in the parameter ranges specified above.

The remainder of the paper is organized as follows. Section~\ref{sec:existence} establishes amplitude confinement, energy and decay estimates, and prescribed-node existence. Section~\ref{sec:criteria} derives sharp comparison bounds and proves the uniqueness results through a common extension of the nonlinearity.

\section{Existence and amplitude bounds}\label{sec:existence}

Write $d=n-1$. At the origin, the radial equation is understood
through
\begin{gather*}
v'(r)=-r^{-d}\int_0^r s^d g_a(v(s))\,ds,\\
v(r)=\alpha-\int_0^r t^{-d}\int_0^t s^d g_a(v(s))\,ds\,dt.
\end{gather*}
The Lipschitz constant of the local integral map is bounded by \(Cr^2\), which yields regular local existence and uniqueness.
In particular, $v''(0)=-g_a(\alpha)/n$.
The nonlinearity is $C^1$ with locally Lipschitz first derivative.

\begin{lemma}\label{lem:decay}
Let $0<a<1$. Then every nontrivial classical solution of \eqref{eq:radial} satisfies $|v|<1$.
For every $0<\mu<\sqrt a$, there is a constant $C_\mu$ such that
\begin{equation}\label{eq:decay}
|v(r)|+|v'(r)|+|v''(r)|\leq C_\mu e^{-\mu r},
\qquad  r\geq1.
\end{equation}
Its zeros are simple and finite in number.
\end{lemma}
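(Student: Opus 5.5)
The plan is to work with the radial energy
\[
E(r)=\tfrac12 v'(r)^2+G_a(v(r)),\qquad E'(r)=-\frac{n-1}{r}v'(r)^2\le0 .
\]
Since $v\to0$, I first show $v'\to0$ along $r\to\infty$, so that $E(r)\downarrow E_\infty=0$. The argument runs as follows. $E$ is nonincreasing, and $v$ bounded on $[1,\infty)$ (it is continuous and tends to $0$) makes $G_a(v)$ bounded. Hence $E$ is bounded below, $v'$ is bounded, and $E$ has a limit $L$. Then $\tfrac12v'^2=E-G_a(v)\to L$. If $L>0$, then $|v'|\to\sqrt{2L}>0$, which is impossible for a function tending to $0$. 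So $L=0$.

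Strict monotonicity then gives $E(r)>0$ for all finite $r$. Indeed, if $E(r_0)=0$, then $E\equiv0$ and $v'\equiv0$ on $[r_0,\infty)$, so $v\equiv0$ there. By uniqueness of the backward IVP at a regular point this forces $v\equiv0$, contradicting nontriviality. If $r_0$ is the origin, the same conclusion follows directly from $v'(0)=0$ and $E(0)=G_a(\alpha)$ with the local integral formulation.

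For the amplitude bound, I first show $|v|\le1$. Suppose $|v(r_1)|>1$ somewhere. Since $v\to0$, there is a point $r_2$ where $|v|$ attains a maximum $M>1$ on $[r_1,\infty)$, or else the origin is the maximum. At such a point $v'=0$ and $|v|=M>1$. There $g_a$ has the sign opposite to $v$, since $(|s|-a)(1-|s|)<0$ for $|s|>1$. So $v''=-g_a(v)$ has the same sign as $v$, which contradicts an interior maximum of $|v|$. If the maximum is at $r=0$, use $v''(0)=-g_a(\alpha)/n$ in the same way. If $|v|$ is maximized at an interior point with value exactly $1$, then $v'=0$ and $v=\pm1$ there, and $v\equiv\pm1$ by uniqueness, contradicting decay. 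Finally, if $\sup|v|=1$ is attained, it is attained at a critical point (or at $0$) with $v=\pm1$, which gives the same contradiction. Hence $|v|<1$.

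For decay, observe that $g_a(s)=-as+O(s^2)$ near $0$. Write $w=r^{(n-1)/2}v$, which satisfies
\[
w''=\Bigl(q(r)+\tfrac{(n-1)(n-3)}{4r^2}\Bigr)w,\qquad q(r)=-g_a(v)/v\to a .
\]
Given $\mu<\sqrt a$, choose $R$ with coefficient $\ge\mu^2$ on $[R,\infty)$. A comparison (maximum principle) argument with $Ce^{-\mu r}$ on $[R,\rho]$, letting $\rho\to\infty$ and using that $w$ is not growing exponentially, gives $|w|\le Ce^{-\mu r}$, hence $|v|\le Ce^{-\mu r}$. This step needs care. I will first show $w$ is eventually monotone and has no zeros, since the coefficient is positive, so no oscillation occurs; in a positive-coefficient region, $w$ convex where positive forces either exponential growth or decay, and $v\to0$ with the polynomial factor rules out growth at rate $e^{\mu' r}$. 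A cleaner alternative is to take $\mu<\mu'<\sqrt a$ and argue on $\{w>0\}$ directly. The derivative bound follows from $v'(r)=-r^{-d}\int_R^r s^dg_a(v)\,ds+r^{-d}R^dv'(R)$, or by integrating $(r^dv')'=-r^dg_a(v)$ from $\infty$ after first showing $r^dv'\to0$. The bound on $v''$ then comes from the equation. Adjusting constants on $[1,R]$ by boundedness (note $|v'|$ is bounded by the energy) finishes \eqref{eq:decay}.

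Zeros are simple because $v(r_0)=v'(r_0)=0$ would give $v\equiv0$ by uniqueness of the IVP at $r_0>0$. They are finite because $v\ne0$ for large $r$ by the nonoscillation argument above, and zeros are isolated on compact sets. The main obstacle is the decay step, specifically ruling out the growing branch and handling the sign of $w$ rigorously.
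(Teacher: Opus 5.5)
Your proposal is correct. The amplitude bound is essentially the paper's argument: the sign of $g_a$ at an extremum of modulus larger than $1$, followed by Cauchy or regular uniqueness at $\pm1$. One small fix: take the global maximum of $|v|$ on $[0,\infty)$ rather than on $[r_1,\infty)$, so that the maximizer is a critical point or the origin.

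The decay step takes a genuinely different route.

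\begin{itemize}
\item \textbf{Paper's route.} It first shows that $v$ has no zeros on $[R,\infty)$: an interior extremum of a lobe with $0<|v|<\eta$ contradicts the sign of $v''=-g_a(v)$. It then reflects to $v>0$ and compares directly with $v(R)e^{-\mu(r-R)}$ for the operator $z''+\frac dr z'-q(v)z$. The only condition used at infinity is $v-w\to0$.
\item \textbf{Your route.} The substitution $w=r^{d/2}v$ turns the problem into the linear equation $w''=c(r)w$ with $c\ge\mu^2$. The comparison then applies to $\pm w$ with no sign information, once the growing branch is excluded. You can do this by adding $\varepsilon e^{\mu r}$, or a $\sinh$ term on $[R,\rho]$, and using $|w|=o(r^{d/2})$. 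It even yields an extra factor $r^{-d/2}$.
\item \textbf{Finiteness of zeros.} Your convexity argument replaces the paper's lobe argument equally well: a positive lobe of $w$ between two zeros is impossible, since $w''=cw>0$ there.
\end{itemize}

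For the bound on $v'$, be careful. Integrating $(r^dv')'=-r^dg_a(v)$ from $R$ only gives $|v'|=O(r^{-d})$, not exponential decay. Your second option is the one that works, but it needs a justification you did not give:

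\begin{itemize}
\item $r^dg_a(v)$ is integrable, so $r^dv'\to\ell$ for some $\ell$.
\item If $\ell\neq0$, then $v\sim-\ell r^{1-d}/(d-1)$ for $n\ge3$, contradicting exponential decay, and $|v|\to\infty$ logarithmically for $n=2$. Hence $\ell=0$.
\item Therefore $|v'(r)|\le r^{-d}\int_r^\infty s^d|g_a(v)|\,ds\le Ce^{-\mu r}$.
\end{itemize}

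The paper instead uses a purely local argument, the mean value theorem on $[r-1,r+1]$ plus an integrating factor, which needs no information at infinity.

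Finally, your energy discussion is not needed for this lemma; the paper uses it only in the next lemma.
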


\begin{proof}
The positive maximum and the negative minimum are attained if the corresponding sign occurs.
At a positive maximum larger than one, $g_a(v)<0$.
The equation would give $\Delta v>0$, a contradiction.
If $v(r_0)=1$, then $v'(r_0)=0$.
Cauchy uniqueness when $r_0>0$, or regular local uniqueness
when $r_0=0$, gives $v\equiv1$, contradicting decay.
The argument at $-1$ is identical.

Choose $\eta>0$ so small that
\[
q(s):=-\frac{g_a(s)}{s}>\mu^2,\qquad 0<|s|<\eta.
\]
For sufficiently large $R$, $|v(r)|<\eta$ on $[R,\infty)$.
There is no zero in this interval.
Indeed, after a zero, a nonzero lobe which tends to zero or ends at another zero has an interior extremum.
At a positive such maximum the equation gives $v''=-g_a(v)>0$.
At a negative minimum, it gives the opposite contradiction.
A double zero would make the entire solution zero by Cauchy uniqueness.
After reflection, we may therefore assume $v>0$ on $[R,\infty)$.

The function $w(r)=v(R)e^{-\mu(r-R)}$ satisfies
\[
w''+\frac d r w'-q(v)w
=\left(\mu^2-\frac{d\mu}{r}-q(v)\right)w<0.
\]
The one-dimensional maximum principle on the exterior interval gives $0<v\leq w$.
It applies directly because $v-w\to0$ as $r\rightarrow\infty$ and a positive maximum would be attained at a finite radius.

The derivative estimates follow from the equation on intervals of fixed length.
For $r\geq R+2$, the mean value theorem gives a point $s\in[r-1,r+1]$ with
\[
|v'(s)|\leq\sup_{[r-1,r+1]}|v|.
\]
On $[r-1,r+1]$, the derivative satisfies the first-order equation
\[
 (v')'(t)+\frac{d}{t}v'(t)=-g_a(v(t)).
\]
Since $d/t$ is bounded on this interval and
$|g_a(v)|\le C|v|$ for $|v|\le1$, integration from $s$ to $r$
with the corresponding integrating factor yields
\[
 |v'(r)|\le C\Bigl(|v'(s)|+\sup_{[r-1,r+1]}|v|\Bigr)
 \le C\sup_{[r-1,r+1]}|v|.
\]
The radial equation then bounds $v''$.
This proves \eqref{eq:decay}.
Every finite zero is simple by Cauchy uniqueness.
{Such zeros cannot accumulate on a compact interval. Otherwise a sequence of zeros would accumulate at some $r_0>0$, forcing $v(r_0)=v'(r_0)=0$ and hence $v\equiv0$. Since $v(0)>0$ and there are no zeros beyond $R$, the total number of zeros is finite.}
\end{proof}

\begin{lemma}\label{lem:energy}
For a radial bound state with $0<a<1$, the energy satisfies
\begin{gather*}
E(r):=\frac12v'(r)^2+G_a(v(r)),\qquad
E'(r)=-\frac d r v'(r)^2,\qquad r>0,\\
E(r)=d\int_r^\infty\frac{v'(s)^2}{s}\,ds>0,\qquad r\ge0.
\end{gather*}
Consequently, $0<a<1/2$ and
\[
\beta_a<v(0)<1,\qquad |v(r)|<v(0),\qquad r>0.
\]
\end{lemma}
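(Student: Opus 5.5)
The plan is to derive the energy identity directly from the radial equation and then read off the consequences from the sign and zeros of $G_a$.

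\emph{Energy identity.} For $r>0$, multiply \eqref{eq:radial} by $v'$ to get
\[
E'(r)=v'v''+g_a(v)v'=v'\Bigl(v''+g_a(v)\Bigr)=-\frac d r v'(r)^2 .
\]
By Lemma~\ref{lem:decay}, $v$ and $v'$ decay exponentially and $G_a(0)=0$, so $E(r)\to0$ as $r\to\infty$. Integrating $E'$ from $r$ to $\infty$ gives
\[
E(r)=d\int_r^\infty \frac{v'(s)^2}{s}\,ds .
\]
The integral converges at infinity because of the exponential decay. At $r=0$ it also converges, since $v'(s)=O(s)$ near $0$ by $v'(s)=-s^{-d}\int_0^s t^d g_a(v)\,dt$; the same bound shows that $E$ is continuous at $0$.

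\emph{Positivity of $E$.} If $E(r_0)=0$ for some $r_0\ge0$, then $v'\equiv0$ on $(r_0,\infty)$. Hence $v$ is constant there, and the constant must be $0$ by decay. Then $v(r_1)=v'(r_1)=0$ at some $r_1>r_0$, and Cauchy uniqueness forces $v\equiv0$, contradicting nontriviality. So $E>0$ for all $r\ge0$.

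\emph{Central value.} At $r=0$ we have $E(0)=G_a(\alpha)>0$. By Lemma~\ref{lem:decay}, $0<\alpha<1$.
\begin{itemize}
\item Factoring, $G_a(s)=-\frac{s^2}{12}\bigl(3s^2-4(1+a)s+6a\bigr)$ for $s>0$. The quadratic has roots $\beta_a$ and $\beta_a^+$ when its discriminant $4(1+a)^2-18a$ is nonnegative, and no real roots otherwise.
\item If $a\ge1/2$, then $G_a\le0$ on $(0,1)$. When the discriminant is negative this is immediate. Otherwise the quadratic is positive outside $[\beta_a,\beta_a^+]$, and $\beta_a^+\ge1$ follows from $G_a(1)=(1-2a)/12\le0$ together with $G_a<0$ near $0^+$. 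Either way this contradicts $G_a(\alpha)>0$, so $0<a<1/2$.
\item For $0<a<1/2$, $G_a(1)>0$ puts $1$ strictly between the two roots. Hence $G_a>0$ exactly on $(\beta_a,1]$ within $(0,1]$, which gives $\alpha>\beta_a$.
\end{itemize}

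\emph{Absolute maximum.} For $r>0$, strict monotonicity of $E$ gives
\[
G_a(v(r))\le E(r)<E(0)=G_a(\alpha).
\]
Here $E$ is strictly decreasing because $v'$ vanishes only at isolated points: otherwise $v'\equiv0$ on an interval, so $v$ is constant there, and a constant solution must be a zero of $g_a$, which is impossible by the uniqueness argument above. We need to convert the inequality $G_a(v(r))<G_a(\alpha)$ into $|v(r)|<\alpha$, using $|v|<1$ and evenness of $G_a$. This conversion is the main obstacle, since $G_a$ is not monotone on $(0,1)$. The relevant facts are:
\begin{itemize}
\item $G_a$ increases on $[a,1]$, because $g_a>0$ there;
\item $G_a\le0$ on $[0,\beta_a]$;
\item $G_a(\alpha)>0$.
\end{itemize}
So if $|v(r)|\ge\alpha$, then $|v(r)|\in[\alpha,1)\subset(a,1)$, and monotonicity gives $G_a(v(r))\ge G_a(\alpha)$, a contradiction. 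Therefore $|v(r)|<\alpha$ for all $r>0$, and the absolute maximum is attained only at the origin.
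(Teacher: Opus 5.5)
Your argument follows the paper's proof: the energy identity, positivity of $E$ via Cauchy uniqueness on a terminal interval, the sign of $G_a$ to get $0<a<1/2$ and $\alpha>\beta_a$, and strict energy loss for the amplitude bound. The last step differs slightly. The paper avoids the non-monotonicity of $G_a$ by taking the first radius $r_1>0$ with $|v(r_1)|=v(0)$, where $E(r_1)\ge G_a(v(0))=E(0)$ by evenness of $G_a$ alone. Your use of the monotonicity of $G_a$ on $[a,1]$ together with $|v|<1$ works equally well.

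There is one slip to fix, in the case $a\ge 1/2$. To conclude $G_a\le0$ on $(0,1)$ you need $\beta_a\ge1$, not $\beta_a^+\ge1$. Moreover, $G_a(1)\le0$ together with $G_a<0$ near $0^+$ does not by itself rule out $\beta_a<\beta_a^+\le1$.

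The simplest repair is the paper's one-liner. Since $g_a<0$ on $(0,a)$ and $g_a>0$ on $(a,1)$, we have $\max_{[0,1]}G_a=\max\{0,G_a(1)\}$; this is the same monotonicity you already use later. Alternatively, the discriminant $4(1+a)^2-18a=2(2a-1)(a-2)$ is negative for $1/2<a<1$, and at $a=1/2$ the double root equals $1$.
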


\begin{proof}
Multiplying the radial equation by \(v'\) yields the differential identity,
and Lemma~\ref{lem:decay} gives
$E(\infty)=0$. The integral is positive: otherwise the solution
would be zero on a terminal interval and hence identically zero
by uniqueness.

On $[0,1]$, the maximum of $G_a$ is
$\max\{0,G_a(1)\}$, with $G_a(1)=(1-2a)/12$.
Thus $a\ge1/2$ contradicts
$G_a(v(0))=E(0)>0$ and $v(0)<1$.
For $0<a<1/2$, the positive roots of $G_a$ are $\beta_a\in(a,1)$
and a root greater than one; hence $v(0)>\beta_a$.
Since $g_a(v(0))>0$ and $v'(0)=0$, the profile initially decreases.
At a first return to $|v|=v(0)$ at a positive radius, its energy
would be at least $G_a(v(0))=E(0)$.
Strict energy loss excludes this return, proving the amplitude bound.
\end{proof}

\begin{proof}[Proof of Proposition~\ref{prop:existence}]
Nonexistence for $1/2\le a<1$ follows from Lemma~\ref{lem:energy}.
Fix $0<a<1/2$. To apply \cite[Theorem~1.1]{Grillakis}, define
\[
\widehat g_a(s)=
\begin{cases}
g_a(s),&|s|\le1,\\
-(1-a)\bigl(s-\operatorname{sgn}s\bigr),&|s|>1.
\end{cases}
\]
The values and first derivatives agree at $\pm1$, because
$g_a(1)=0$ and $g_a'(1)=a-1$.
Thus $\widehat g_a$ is odd, $C^1$ and globally Lipschitz.

In the sign convention of \cite{Grillakis}, take
$f=-\widehat g_a$ in $-\Delta v+f(v)=0$.
Its positive mass is $f'(0)=a$.
Near zero,
\[
as+\widehat g_a(s)=(1+a)|s|s-s^3=O(s^2),
\]
whereas this expression grows at most linearly at infinity.
Consequently,
\[
|as+\widehat g_a(s)|\le C_{a,\sigma}|s|^{1+\sigma},
\qquad s\in\mathbb R,\quad 0<\sigma<1.
\]
For $n\ge3$, choose $\sigma$ additionally below $4/(n-2)$.
Choose $0<\sigma_1<\sigma_2<\min\{1,4/(n-2)\}$ for
$n\ge3$, or any $0<\sigma_1<\sigma_2<1$ for $n=2$.
Writing $as+\widehat g_a(s)=g_1(s)+g_2(s)$ with
$g_1=g_2=(as+\widehat g_a)/2$, the preceding estimates give
$|g_j(s)|\le C_j|s|^{1+\sigma_j}$ globally for $j=1,2$.
This is precisely the subcritical decomposition required in
(A2) of \cite{Grillakis}; (A1) holds with positive mass $a$.

Put $\widehat G_a(s)=\int_0^s\widehat g_a(t)\,dt$.
The first nonzero roots of the source potential $-\widehat G_a$
are $\pm\beta_a$, and
\[
-\widehat G_a(\pm\beta_a)=0,\qquad
-\widehat g_a(\pm\beta_a)\ne0,\qquad
-\widehat G_a(\pm1)=-\frac{1-2a}{12}<0.
\]
Thus the potential $F=-\widehat G_a$ satisfies (A3) of \cite{Grillakis}.
 Moreover,
\[
 -\widehat G_a(s)
=-G_a(1)+\frac{1-a}{2}(|s|-1)^2,\qquad |s|\ge1,
\]
so $-\widehat G_a(s)\to\infty$ as $|s|\to\infty$.
Grillakis's theorem therefore gives a radial
$v\in H^1(\mathbb R^n)$ with exactly $k$ nodes satisfying
$-\Delta v-\widehat g_a(v)=0$.

It remains to show that this solution solves the original equation.
Testing with $(v-1)_+\in H^1(\mathbb R^n)$ yields
\[
\int_{\mathbb R^n}|\nabla(v-1)_+|^2
=-(1-a)\int_{\mathbb R^n}(v-1)_+^2.
\]
Hence $v\le1$. Applying the same argument to $-v$ gives $v\ge-1$.
Thus $\widehat g_a(v)=g_a(v)$ everywhere.

Since $|v|\le1$ and $v\in L^2$, both $v$ and $g_a(v)$ belong
to $L^p$ for every finite $p\ge2$.
{Applying the global $L^p$ estimate to $(-\Delta+1)v=v+g_a(v)$ gives $v\in W^{2,p}(\mathbb R^n)$ for 
every finite $p\ge2$.}
Choosing $p>n$ and then applying local elliptic regularity
gives a classical solution. As
$v\in H^1_{\mathrm{rad}}(\mathbb R^n)$ and $n\ge2$,
the radial lemma \cite[Lemma A.II]{BL1983I} yields $v(r)\to0$ as $r\to\infty$. By uniqueness for the regular radial initial-value
problem, $v(0)\ne0$. Replacing $v$ by $-v$ if necessary,
we may assume $v(0)>0$. Finally,
Lemmas~\ref{lem:decay} and~\ref{lem:energy} yield
the strict amplitude bound, exponential decay,
and the absolute-maximum assertion.
This completes the proof.
\end{proof}

\section{Uniqueness by comparison}\label{sec:criteria}

We now apply the comparison criteria of \cite{CGHY}.
Let $f$ be odd and $C^1$, with $f<0$ on $(0,b_0)$ and $f>0$ on $(b_0,\infty)$.
Write $F(s)=\int_0^s f(t)\,dt$.
Suppose $F(\beta)=0$ for some $\beta>b_0$ and $F(s)\to\infty$ as $s\to\infty$.
By \cite[Theorem~1.1]{CGHY}, for $2\leq n\leq4$, the condition
\begin{equation}\label{eq:CGH-all}
\left(\frac Ff\right)'(s)\geq\frac{n-2}{2},\qquad s>\beta
\end{equation}
implies $\Delta u+f(u)=0$ admits at most one radial bound state with any prescribed positive number of nodes.
For $n\geq2$, \cite[Theorem~1.2]{CGHY} gives at most one 1-node radial bound state under the conditions
\begin{equation}\label{eq:CGH-one}
f(s)\geq(s-\beta)f'(s),\qquad
\left(\frac Ff\right)'(s)\geq\frac{n-2}{2n},
\qquad s>\beta.
\end{equation}
{In both statements, only solutions with positive central value are compared. The first inequality in \eqref{eq:CGH-one} also holds at $s=\beta$ by continuity.} The $C^1$ assumption also ensures $f'\in L^1(0,1)$, as required in \cite{CGHY}.

Since $g_a(s)$ becomes negative for $s>1$, it does not satisfy the global sign assumption of the comparison theorem. However, both states under comparison have amplitudes strictly below $1$. We may therefore choose $M<1$ larger than their maximum amplitudes and modify $g_a$ only for $|s|>M$ to meet this assumption, without affecting either state.

\begin{lemma}\label{lem:extension}
Let $0<a<1/2$ and $v_1,v_2$ be radial bound states for $g_a$.
Suppose, on $(\beta_a,1)$, that
\[
\left(\frac{G_a}{g_a}\right)'\geq C_0,\qquad C_0\leq1.
\]
There exists an odd $C^1$ function $\widetilde g$ on $\mathbb R$ for which both $v_1,v_2$ remain solutions and, with $\widetilde G(s)=\int_0^s\widetilde g(t)\,dt$,
\begin{gather*}
\widetilde g<0\text{ on }(0,a),\qquad
\widetilde g>0\text{ on }(a,\infty),\\
\widetilde G(\beta_a)=0,\qquad
\widetilde G(s)\rightarrow\infty\text{ as }s\to\infty,\qquad
\left(\frac{\widetilde G}{\widetilde g}\right)'\geq C_0\text{ on }(\beta_a,\infty).
\end{gather*}
If $g_a(s)\geq(s-\beta_a)g_a'(s)$ on $(\beta_a,1)$, the extension also satisfies this inequality on $(\beta_a,\infty)$.
\end{lemma}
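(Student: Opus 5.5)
The plan is to keep $g_a$ unchanged on $[0,M]$ and to glue on a tail whose quotient $\widetilde G/\widetilde g$ is affine. Here $M\in(\beta_a,1)$ is chosen with $\max\{\|v_1\|_\infty,\|v_2\|_\infty\}<M$, which is possible by Lemma~\ref{lem:energy}. Since $|v_i|<M$, both states only see $\widetilde g$ on $[-M,M]$, so they remain solutions. We then extend oddly. On $[0,M]$ we automatically have $\widetilde g<0$ on $(0,a)$, $\widetilde g>0$ on $(a,M]$ (because $M<1$), and $\widetilde G(\beta_a)=G_a(\beta_a)=0$ since $\beta_a<M$. The inequalities $(\widetilde G/\widetilde g)'\ge C_0$ and $\widetilde g\ge (s-\beta_a)\widetilde g'$ hold on $(\beta_a,M]$ by hypothesis. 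So everything reduces to building the tail on $(M,\infty)$.

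For the tail, set $Q(s)=\widetilde G/\widetilde g$ and prescribe $Q$ directly. Put $G_M=G_a(M)>0$, $g_M=g_a(M)>0$, and $Q_M=G_M/g_M$. For $s\ge M$ define
\[
Q(s)=Q_M+\lambda (s-M),\qquad \widetilde G(s)=G_M\exp\Bigl(\int_M^s\frac{dt}{Q(t)}\Bigr),\qquad \widetilde g=\widetilde G'=\widetilde G/Q .
\]
Then $\widetilde g(M)=g_M$, so the glued function is continuous, $\widetilde g>0$, and $(\widetilde G/\widetilde g)'=\lambda$.

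$C^1$ matching requires $\widetilde g'(M^+)=g_a'(M)$. Since $\widetilde g'=\widetilde g(1-Q')/Q$, this means
\[
1-\lambda=Q_M\,g_a'(M)/g_M,\qquad\text{that is,}\qquad \lambda=Q'(M^-)=(G_a/g_a)'(M),
\]
the left derivative of the original quotient. By hypothesis $\lambda\ge C_0$, so the quotient condition holds on the tail. The growth condition follows from $\lambda\le1$, which I expect to be the main obstacle. Hence the constraint $C_0\le1$ in the statement: if the original $Q'(M)$ exceeds $1$, I would instead modify $Q$ on a short interval past $M$ by a smooth monotone interpolation from $Q'(M)$ down to $\max\{C_0,\min(Q'(M),1)\}$. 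This keeps $Q'\ge C_0$ while keeping $Q>0$ and $C^1$ gluing of $\widetilde g$; it needs $Q$ to be $C^1$, hence $\widetilde g\in C^1$. With $Q$ eventually affine of slope $\lambda\in(0,1]$, the integral $\int^s dt/Q$ grows like $\lambda^{-1}\log s$, so $\widetilde G\sim Cs^{1/\lambda}\to\infty$. If $\lambda$ could be $\le0$ we would need $Q>0$ to persist; since $C_0$ may be negative, the safe choice is to interpolate $Q'$ to some constant in $[\max(C_0,\epsilon),1]$, which is always possible because $C_0\le1$.

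For the last claim, the inequality $\widetilde g\ge(s-\beta_a)\widetilde g'$ is equivalent, where $\widetilde g>0$, to
\[
(s-\beta_a)\,\frac{1-Q'}{Q}\le1 .
\]
With $Q'$ eventually constant $\lambda$ and $Q$ affine, this reads $(s-\beta_a)(1-\lambda)\le Q_M+\lambda(s-M)$. For large $s$ this needs $\lambda\ge 1/2$ roughly, and it is linear in $s$, so checking at $s=M$, where it is the hypothesis, and as $s\to\infty$ suffices. To keep the tail compatible, I would therefore choose the terminal slope $\lambda=1$. Then $\widetilde g'\to0$ relative to $\widetilde g$: indeed $\widetilde g'=\widetilde g(1-Q')/Q$ has sign $1-Q'\ge0$ small, and the inequality becomes $(s-\beta_a)(1-Q')\le Q$. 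This holds at $M$ by hypothesis and is preserved if we interpolate $Q'$ monotonically toward $1$, since the left side decreases relative to $Q$, which is increasing. The delicate point, which I would verify last, is that during the interpolation both inequalities hold simultaneously. Taking $Q'$ to be a convex combination of $Q'(M)$ and $1$ with weight increasing in $s$ makes each inequality a combination of ones already known at the endpoints.
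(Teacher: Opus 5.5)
Your idea of prescribing $Q=\widetilde G/\widetilde g$ on $[M,\infty)$ is sound. With $Q\in C^1$, $Q(M)=G_a(M)/g_a(M)$ and $Q'(M)=(G_a/g_a)'(M^-)$, the formulas $\widetilde G=G_a(M)\exp\int_M^s dt/Q$ and $\widetilde g=\widetilde G/Q$ give a $C^1$ gluing, positivity of the tail, and $(\widetilde G/\widetilde g)'=Q'$.

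\textbf{The gap: the tangent inequality on the ramp.} The step you defer is the case $Q'(M)<1$, and your reasons for it are wrong. Put $\phi=Q-(s-\beta_a)(1-Q')$, so the tangent inequality is $\phi\ge0$. Then $\phi'=2Q'-1+(s-\beta_a)Q''$, which is negative wherever $Q'<1/2$ and $Q''$ is small. So the claim that ``the left side decreases relative to $Q$'' is false. The convex-combination remark also fails: at slope $Q'(M)$ the inequality is known only at $s=M$, and you observed yourself that it fails at infinity once $Q'(M)<1/2$.

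Nothing in your choice of $M$ excludes this regime. We have $m(a)<1/2$ for $a<a_3$, and at $a=a_T$, $s=(1+a_T)/3$ one has $\phi=0$ with $Q'\approx0.47$. A ramp leaving such an $M$ with zero slope makes $\phi<0$ immediately.

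A quantitative ramp would repair this. Writing $Q'=\lambda_0+(1-\lambda_0)w$ gives $\phi(s)-\phi(M)=(2\lambda_0-1)(s-M)+(1-\lambda_0)\bigl[\int_M^s w+(s-\beta_a)w(s)\bigr]$. Hence $w=\min\{1,K(s-M)\}$ with $K\ge(1-2\lambda_0)/((1-\lambda_0)(M-\beta_a))$ works. As written, though, the proof has a hole exactly at the point you labeled delicate.

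\textbf{The missing idea: push $M$ toward $1$.} This is what the paper uses, and it removes the interpolation entirely. You misdiagnosed the main obstacle. An affine tail of slope $\lambda$ gives $\widetilde G=G_a(M)\bigl(Q(s)/Q(M)\bigr)^{1/\lambda}\to\infty$ for every $\lambda>0$, so $\lambda>1$ is harmless. It is in fact the good case: from $1-\lambda=Q(M)g_a'(M)/g_a(M)$, $\lambda>1$ is exactly $g_a'(M)<0$. This holds for $M$ near $1$ because $g_a'(1)=a-1<0$; equivalently, the $(1-s)^{-2}$ term in \eqref{eq:quotient} blows up.

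For such $M$ your plain affine tail already does everything:
\begin{itemize}
\item $Q'=\lambda>1\ge C_0$;
\item $\widetilde g'=\widetilde g(1-\lambda)/Q<0$, which makes $\widetilde g\ge(s-\beta_a)\widetilde g'$ trivial on the tail.
\end{itemize}

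The paper does the same thing with an explicit positive decreasing tail, $g_a(M)+hg_a'(M)\bigl(1-e^{-(s-M)/h}\bigr)$ with $0<h\le-g_a(M)/(2g_a'(M))$. From $\widetilde G\ge G_a(M)>0$ and $\widetilde g'<0$ it reads off $(\widetilde G/\widetilde g)'=1-\widetilde G\widetilde g'/\widetilde g^2>1$. This is exactly where the hypothesis $C_0\le1$ enters.
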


\begin{proof}
By Lemma~\ref{lem:energy}, both positive central amplitudes
lie strictly below $1$ and dominate the absolute values of the
corresponding profiles. Since $g_a'(1)=a-1<0$, we can
choose $M\in(\beta_a,1)$ larger than both central amplitudes
and sufficiently close to $1$ that $g_a'(M)<0$.
The two profiles are then confined to $(-M,M)$.
Choose $h>0$ with $h\le-g_a(M)/(2g_a'(M))$, and define
\[
\widetilde g(s)=
\begin{cases}
g_a(s),&0\le s\le M,\\
g_a(M)+h g_a'(M)\bigl(1-e^{-(s-M)/h}\bigr),&s> M.
\end{cases}
\]
Extend this function oddly. The values and first derivatives
agree at $M$, so $\widetilde g\in C^1(\mathbb R)$.
For $s\geq M$,
\[
\widetilde g(s)\ge\tfrac12g_a(M)>0,\qquad
\widetilde g'(s)=g_a'(M)e^{-(s-M)/h}<0.
\]
Thus $\widetilde G(s)\to\infty$ and
$\widetilde G(s)\geq G_a(M)>0$ on this interval.
It follows that
\[
\left(\frac{\widetilde G}{\widetilde g}\right)'
=1-\frac{\widetilde G\widetilde g'}{\widetilde g^2}
>1\ge C_0,\qquad
\widetilde g(s)>(s-\beta_a)\widetilde g'(s).
\]
Below $M$, all required inequalities are those of the original
nonlinearity.
\end{proof}

\begin{proposition}\label{prop:minimum}
Let $0<a<1/2$ and $a<s<1$.
Then
\begin{equation}\label{eq:quotient}
\left(\frac{G_a}{g_a}\right)'(s)
=\frac14+
\frac{a^2(2-a)}{12(1-a)(s-a)^2}
+\frac{1-2a}{12(1-a)(1-s)^2}.
\end{equation}
Put
\[
A=\frac{a^2(2-a)}{12(1-a)^3},\qquad
B=\frac{1-2a}{12(1-a)^3},\qquad
s_*=a+(1-a)\frac{A^{\frac{1}{3}}}{A^{\frac{1}{3}}+B^{\frac{1}{3}}}.
\]
The quantity in \eqref{defm} is
\begin{equation}\label{eq:minimum}
m(a)=\begin{cases}
\displaystyle\frac14+\left(A^{\frac{1}{3}}+B^{\frac{1}{3}}\right)^3,&s_*\geq\beta_a,\\[6pt]
1,&s_*<\beta_a.
\end{cases}
\end{equation}
\end{proposition}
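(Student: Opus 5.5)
The plan is to compute the quotient explicitly by partial fractions, note that it is a strictly convex function of $s$ on $(a,1)$, minimize it by a Hölder-type argument, and then locate the minimizer relative to $\beta_a$.

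\emph{Step 1: the formula \eqref{eq:quotient}.} For $s>0$ I would factor
\[
g_a(s)=-s(s-a)(s-1),\qquad G_a(s)=-\tfrac{s^2}{12}\bigl(3s^2-4(1+a)s+6a\bigr),
\]
so that on $(a,1)$
\[
\frac{G_a}{g_a}(s)=\frac{s\bigl(3s^2-4(1+a)s+6a\bigr)}{12(s-a)(s-1)}.
\]
Polynomial division gives $\frac{s}{4}+\kappa+\frac{\alpha}{s-a}+\frac{\beta}{s-1}$ for some constant $\kappa$. The residues are read off from the numerator at the poles. At $s=a$ the numerator is $a^2(2-a)$, so
\[
\alpha=-\frac{a^2(2-a)}{12(1-a)}.
\]
At $s=1$ the numerator is $2a-1$, so
\[
\beta=\frac{2a-1}{12(1-a)}.
\]
Differentiating gives $\frac14-\alpha(s-a)^{-2}-\beta(s-1)^{-2}$, which is exactly \eqref{eq:quotient}.

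\emph{Step 2: the minimum over all of $(a,1)$.} Write $\varphi$ for the right-hand side of \eqref{eq:quotient} and set
\[
c_1=\frac{a^2(2-a)}{12(1-a)}=A(1-a)^2,\qquad c_2=\frac{1-2a}{12(1-a)}=B(1-a)^2.
\]
Both are positive for $0<a<1/2$. Hence $\varphi$ is strictly convex on $(a,1)$ and tends to $+\infty$ at both endpoints.

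With $x=s-a$, $y=1-s$ and $x+y=L=1-a$, the critical-point equation $c_1x^{-3}=c_2y^{-3}$ gives $x/L=c_1^{1/3}/(c_1^{1/3}+c_2^{1/3})$. Since $c_1^{1/3}:c_2^{1/3}=A^{1/3}:B^{1/3}$, this is precisely $s=s_*$. Substituting back (equivalently, by Hölder's inequality) gives the minimum value
\[
\frac{c_1}{x^2}+\frac{c_2}{y^2}\ \ge\ \frac{\bigl(c_1^{1/3}+c_2^{1/3}\bigr)^3}{L^2}=\bigl(A^{1/3}+B^{1/3}\bigr)^3 .
\]
So $\min_{(a,1)}\varphi=\frac14+(A^{1/3}+B^{1/3})^3$, attained only at $s_*$.

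\emph{Step 3: restriction to $(\beta_a,1)$.} Since $a<\beta_a<1$, there are two cases.
\begin{itemize}
\item If $s_*\ge\beta_a$, then (for $s_*>\beta_a$) the global minimizer lies in $(\beta_a,1)$ and $m(a)$ equals the value from Step 2. When $s_*=\beta_a$ the infimum equals the value at $\beta_a$, which coincides with both formulas by the key observation below.
\item If $s_*<\beta_a$, convexity makes $\varphi$ increasing on $(\beta_a,1)$. Hence $m(a)=\varphi(\beta_a)$, which is not attained on the open interval.
\end{itemize}

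The key observation is that
\[
\Bigl(\frac{G_a}{g_a}\Bigr)'=1-\frac{G_ag_a'}{g_a^2}.
\]
Since $G_a(\beta_a)=0$ and $g_a(\beta_a)\ne0$ (because $\beta_a\in(a,1)$), this quotient equals $1$ at $\beta_a$. This yields \eqref{eq:minimum}, and it also shows the two branches agree when $s_*=\beta_a$.

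I expect no serious obstacle. The only delicate points are the sign bookkeeping in the residues $\alpha,\beta$ and the endpoint case, which the identity $\varphi(\beta_a)=1$ handles cleanly.
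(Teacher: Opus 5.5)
Your proposal is correct and follows essentially the same route as the paper: partial fractions for $G_a/g_a$, strict convexity of $(G_a/g_a)'$ on $(a,1)$ with its unique critical point at $s_*$, and the identity $(G_a/g_a)'(\beta_a)=1$ (from $G_a(\beta_a)=0$ and $g_a(\beta_a)>0$) to handle the case $s_*<\beta_a$. Your residues and the H\"older bound for the minimum value are correct, and your explicit check that the two branches agree when $s_*=\beta_a$ is a small but welcome addition.
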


\begin{proof}
A straightforward partial-fraction decomposition yields
\[
\frac{G_a(s)}{g_a(s)}
=\frac s4-\frac{1+a}{12}
-\frac{a^2(2-a)}{12(1-a)(s-a)}
+\frac{1-2a}{12(1-a)(1-s)}.
\]
Differentiation proves \eqref{eq:quotient}.
Write the right-hand side of \eqref{eq:quotient} as $D(s)$.
Then
\[
D'(s)=2(1-a)^2\left(-\frac{A}{(s-a)^3}+\frac{B}{(1-s)^3}\right),
\qquad
D''(s)=6(1-a)^2\left(\frac{A}{(s-a)^4}+\frac{B}{(1-s)^4}\right)>0.
\]
Thus the unique minimum on $(a,1)$ is attained at $s_*$.
Substitution gives the first expression in \eqref{eq:minimum}.
If $s_*<\beta_a$, $D(s)$ increases on $(\beta_a,1)$.
Since $G_a(\beta_a)=0$ and $g_a(\beta_a)>0$, its infimum there is $D(\beta_a)=1$.
\end{proof}

\begin{proposition}\label{prop:thresholds}
{For every $0<a<1/2$, the inequality $(G_a/g_a)'>0$ holds on $(\beta_a,1)$.}
Moreover,
\begin{align}
(G_a/g_a)'\geq\tfrac12\text{ on }(\beta_a,1)
&\quad\Longleftrightarrow\quad a\geq a_3,
\label{eq:threshold-three}\\
(G_a/g_a)'\geq1\text{ on }(\beta_a,1)
&\quad\Longleftrightarrow\quad a\geq a_4.
\label{eq:threshold-four}
\end{align}
Since $(G_a/g_a)'(\beta_a)=1$, the inequality $(G_a/g_a)'\ge(n-2)/2$ cannot hold throughout $(\beta_a,1)$ when $n\ge5$. Furthermore, $m(a)$ is continuous and strictly increasing on $(0,a_3]$ and satisfies $\lim_{a\downarrow0}m(a)=1/3$, $m(a_3)=1/2$.
\end{proposition}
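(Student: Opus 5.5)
Everything will be read off the explicit formula \eqref{eq:quotient} of Proposition~\ref{prop:minimum}. Write $D(s)=(G_a/g_a)'(s)$. All three summands are positive on $(a,1)$, since $0<a<1/2$ gives $a^2(2-a)>0$, $1-2a>0$ and $1-a>0$. Hence $D>1/4>0$ on $(\beta_a,1)$, which is the first claim. The equality $D(\beta_a)=1$ holds because $D=1-G_ag_a'/g_a^2$ and $G_a(\beta_a)=0$, $g_a(\beta_a)>0$. As $D$ is continuous up to $\beta_a$, the bound $D\ge(n-2)/2>1$ fails near $\beta_a$ when $n\ge5$.

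By Proposition~\ref{prop:minimum}, the infimum $m(a)$ of $D$ over $(\beta_a,1)$ equals $1$ when $s_*<\beta_a$. Otherwise it equals $\frac14+(A^{1/3}+B^{1/3})^3$, and this value is attained at $s_*$. The key observation is that $D'$ has the sign of $s-s_*$, so $s_*<\beta_a$ holds exactly when $D'(\beta_a)>0$. Moreover, $D(s_*)\le D(\beta_a)=1$ always holds. Consequently, for any threshold $\theta\le1$ we get
\[
m(a)\ge\theta\iff s_*<\beta_a \text{ or } \tfrac14+\bigl(A^{1/3}+B^{1/3}\bigr)^3\ge\theta,
\]
and the second alternative already covers the first. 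Thus $m(a)\ge\theta$ is equivalent to $(A^{1/3}+B^{1/3})^3\ge\theta-\frac14$. For $\theta=1$ one has to handle the case $s_*\ge\beta_a$ separately, in which $D(s_*)=1$ forces $s_*=\beta_a$. Condition \eqref{eq:threshold-four} is therefore equivalent to $D'(\beta_a)\ge0$, i.e.
\[
A(1-\beta_a)^3\le B(\beta_a-a)^3 .
\]
I would turn this into a polynomial condition in $a$. I would use $G_a(\beta_a)=0$, i.e. $3\beta^2-4(1+a)\beta+6a=0$, to eliminate powers of $\beta$ and isolate $\sqrt{4(1+a)^2-18a}$. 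Squaring should produce the quadratic whose relevant root is $(43-5\sqrt{33})/32$. This algebra is the step I expect to be the main obstacle: I must keep track of signs before squaring and make sure the spurious root is discarded. I would check it numerically, e.g. at $a\approx0.0085$.

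For \eqref{eq:threshold-three} with $\theta=1/2$, the condition becomes $A^{1/3}+B^{1/3}\ge 4^{-1/3}$. Multiplying by $(12(1-a)^3)^{1/3}=12^{1/3}(1-a)$ turns it into
\[
\bigl(a^2(2-a)\bigr)^{1/3}+(1-2a)^{1/3}\ge 3^{1/3}(1-a).
\]
I would guess that equality occurs at a value where both cube roots become rational in $\sqrt5$. With $\varphi$-type numbers one has $a_3=(7-3\sqrt5)/2=\varphi^{-4}$, and I would verify equality there directly. Monotonicity of the left side minus the right side will then give the equivalence.

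For the remaining claims, note that on $(0,a_3]$ we have $m(a)\le1/2<1$, so the branch $m=\frac14+(A^{1/3}+B^{1/3})^3$ is active. Indeed, if $s_*<\beta_a$ then $m=1$, which contradicts \eqref{eq:threshold-three} (for $a<a_3$ we have $m<1/2$, and at $a_3$ equality must be checked). Continuity follows from continuity of $A$ and $B$. For strict monotonicity I would differentiate $\phi(a)=A^{1/3}+B^{1/3}=\bigl[(a^2(2-a))^{1/3}+(1-2a)^{1/3}\bigr]/\bigl(12^{1/3}(1-a)\bigr)$ and show $\phi'>0$ for $a<a_3$. This reduces to an inequality between cube roots, which I would clear by substituting $p=(a^2(2-a))^{1/3}$ and $q=(1-2a)^{1/3}$. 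Finally, as $a\downarrow0$ we have $A\to0$ and $B\to1/12$, so $m\to\frac14+\frac1{12}=\frac13$, and $m(a_3)=1/2$ is the equality case from \eqref{eq:threshold-three}.
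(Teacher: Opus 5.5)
Your treatment of the first claim, of $D(\beta_a)=1$ and the $n\ge5$ remark, and of \eqref{eq:threshold-four} via $m(a)\ge1\iff s_*\le\beta_a\iff D'(\beta_a)\ge0$ is correct. The algebra you expect to be the obstacle is short. Since $G_a(\beta_a)=0$, one has $D'(\beta_a)=-g_a'(\beta_a)/g_a(\beta_a)$. Then $3\beta_a^2=4(1+a)\beta_a-6a$ turns $g_a'(\beta_a)\le0$ into $\beta_a\ge 5a/(2(1+a))$, and from there into $16a^2-43a+16\le0$. Note that $a_4\approx0.446$, not $0.0085$; a value below $a_3$ would contradict $m\le\tfrac12$ on $(0,a_3]$.

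The genuine gap is in \eqref{eq:threshold-three}. Your reduction ``for every $\theta\le1$, $m(a)\ge\theta\iff\tfrac14+(A^{1/3}+B^{1/3})^3\ge\theta$'' is false. Only $\Leftarrow$ holds, because $m(a)\ge\inf_{(a,1)}D=D(s_*)$. The converse needs $m(a)=D(s_*)$, i.e.\ $s_*\ge\beta_a$, and this fails for $a>a_4$. There $m(a)=1$, while $D(s_*)\to\tfrac12$ as $a\uparrow\tfrac12$ (since $A\to\tfrac14$, $B\to0$). So the second alternative does not cover the first, e.g.\ for $\theta=\tfrac34$; your separate treatment of $\theta=1$ already reflects this. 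Consequently the implication $a<a_3\Rightarrow m(a)<\tfrac12$ is not established. It requires $s_*\ge\beta_a$ for $a<a_3$, and your last paragraph derives this from \eqref{eq:threshold-three} itself, which is circular.

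There is a second defect. The function $\psi(a)=(a^2(2-a))^{1/3}+(1-2a)^{1/3}-3^{1/3}(1-a)$ is not monotone on $(0,\tfrac12)$. Your golden-ratio guess is right: $a_3^2(2-a_3)=3\varphi^{-9}$, $1-2a_3=3\varphi^{-3}$ and $1-a_3=\varphi^{-1}+\varphi^{-3}$, so $\psi(a_3)=0$. But $\psi$ also vanishes at $a=\tfrac12$ and is positive in between, so monotonicity cannot give $\psi\ge0$ on $[a_3,\tfrac12)$.

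Both points can be repaired within your framework:
\begin{itemize}
\item The $\theta=1$ computation shows $s_*\ge\beta_a\iff a\le a_4$. Hence $m=\tfrac14+\phi^3$ with $\phi=A^{1/3}+B^{1/3}$ on $(0,a_4]$, and $m=1$ on $[a_4,\tfrac12)$.
\item On $(0,\tfrac14]$ both $A$ and $B$ increase, since $A'=a(4-a)/(12(1-a)^4)$ and $B'=(1-4a)/(12(1-a)^4)$. So $\phi$ crosses $4^{-1/3}$ exactly once, at $a_3$.
\item On $[\tfrac14,a_4]$, $A$ increases and $B$ decreases. So $\phi$ is at least $A^{1/3}$ at $a=\tfrac14$ plus $B^{1/3}$ at $a=a_4$, roughly $0.278+0.375>4^{-1/3}\approx0.630$.
\end{itemize}
The monotonicity of $A$ and $B$ also gives strict monotonicity of $m$ on $(0,a_3]$ at once, without differentiating cube roots; this is exactly what the paper does.

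For \eqref{eq:threshold-three} itself, the paper avoids locating $s_*$ altogether. It writes $g_a^2-2G_ag_a'=\tfrac{s^3}{6}P_a(s)$ with an explicit cubic $P_a$, whose minimum on $[\beta_a,1]$ is at the critical point $s_-$. There $P_a(s_-)=\tfrac29\delta^2(1-\delta)(1+a)^3$ with $\delta=\sqrt{2-9a/(1+a)^2}$, so the condition reduces directly to $(1+a)^2\le9a$ on $(\beta_a,1)$.
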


\begin{proof}
The first assertion follows immediately from \eqref{eq:quotient}.

We begin with \eqref{eq:threshold-three}. Set $L=1+a$ and $D=\sqrt{2L^2-9a}$. Then $D>0$ for every $0<a<1/2$.
A direct expansion yields
\[
g_a(s)^2-2G_a(s)g_a'(s)=\frac{s^3}{6}P_a(s),
\]
where
\[
P_a(s):=-3s^3+6Ls^2-(2L^2+9a)s+4La.
\]
Since
\[
P_a'(s)=-9(s-s_-)(s-s_+),\qquad
s_-=\frac{2L-D}{3},\quad s_+=\frac{2L+D}{3},
\]
and
\[
\beta_a=\frac{2L-\sqrt2D}{3}<s_-,
\qquad
D^2-(1-2a)^2=(1-2a)(1+a)>0,
\]
we have $\beta_a<s_-<1<s_+$. Thus $P_a$ attains
its minimum on $[\beta_a,1]$ at $s_-$.
Putting $x={a}/{L^2}$, $\delta={D}/{L}=\sqrt{2-9x}$,
and substituting $s_-=L(2-\delta)/3$, we obtain
\[
\frac{P_a(s_-)}{L^3}
=\frac{2\delta^2(1-\delta)}{9}.
\]
Consequently, $P_a\ge0$ on $(\beta_a,1)$ if and only if
$\delta\le1$, equivalently $(1+a)^2\le9a$. Since
$0<a<1/2$, the latter condition is equivalent to
\[
a\ge\frac{7-3\sqrt5}{2}=a_3.
\]
The identity
\[
\left(\frac{G_a}{g_a}\right)'-\frac12
=\frac{g_a^2-2G_ag_a'}{2g_a^2}
\]
now proves \eqref{eq:threshold-three}.

Next, $G_a,g_a>0$ on $(\beta_a,1)$, and hence
\[
\left(\frac{G_a}{g_a}\right)'\ge1
\quad\Longleftrightarrow\quad g_a'\le0.
\]
The larger root of $g_a'(s)=-3s^2+2Ls-a$ is $s_m=\left(L+\sqrt{1-a+a^2}\right)/{3}$,
while the smaller root is below $a$. Therefore the
inequality holds throughout $(\beta_a,1)$ precisely when
$\beta_a\ge s_m$. In terms of
$x=a/L^2\in(0,2/9)$, this condition becomes
\[
\sqrt{4-18x}+\sqrt{1-3x}\le1.
\]
The left-hand side is strictly decreasing on $(0,2/9)$
and equals $1$ at $x=16/75$. It follows that
\[
\beta_a\ge s_m
\quad\Longleftrightarrow\quad
\frac{a}{(1+a)^2}\ge\frac{16}{75}
\quad\Longleftrightarrow\quad
16a^2-43a+16\le0.
\]
For $0<a<1/2$, this is equivalent to
\[
a\ge\frac{43-5\sqrt{33}}{32}=a_4,
\]
which proves \eqref{eq:threshold-four}.

Finally, let $0<a\le a_3$. \eqref{eq:threshold-three} gives
$m(a)<1/2$ for $a<a_3$ and $m(a_3)=1/2$.
Since $(G_a/g_a)'(\beta_a)=1$, the minimizer
$s_*$ from Proposition~\ref{prop:minimum} must lie in
$(\beta_a,1)$. Consequently,
\eqref{eq:minimum}  gives
\[
m(a)=\frac14+\left(A^{\frac{1}{3}}+B^{\frac{1}{3}}\right)^3.
\]
Moreover,
\[
A'(a)=\frac{a(4-a)}{12(1-a)^4}>0,
\qquad
B'(a)=\frac{1-4a}{12(1-a)^4}>0
\]
since $0<a\le a_3<1/4$.
Hence $m$ is strictly increasing on $(0,a_3]$. 
{Continuity of $m$ follows from \eqref{eq:minimum}, since the two branches agree when $s_*=\beta_a$.} As $a\downarrow0$, we have $A\to0$ and $B\to1/12$,
so $\lim_{a\downarrow0}m(a)=1/3$.
\end{proof}

\begin{proposition}\label{prop:tangent}
Let $0<a<1/2$.
Then
\begin{equation}\label{eq:tangent-test}
g_a(s)\geq(s-\beta_a)g_a'(s)\text{ on }(\beta_a,1)
\quad\Longleftrightarrow\quad
\beta_a\geq\frac{(1+a)^3}{9(1-a+a^2)}.
\end{equation}
There exists a unique $a_T\in(0,a_3)$ satisfying
\[
\beta_{a_T}
=\frac{(1+a_T)^3}{9(1-a_T+a_T^2)}.
\]
Moreover, the comparison condition in \eqref{eq:tangent-test}
holds if and only if $a\ge a_T$. Furthermore,
\begin{equation}\label{eq:tangent-dimension}
\frac{21}{46}<m(a_T)<\frac{11}{24}.
\end{equation}
\end{proposition}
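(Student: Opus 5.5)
The plan is to reduce the tangent comparison to a one-variable minimization, which is possible because $g_a''$ is linear on $(0,\infty)$. Write $L=1+a$ and $\beta=\beta_a$, and set
\[
T(s)=g_a(s)-(s-\beta)g_a'(s),\qquad s>0 .
\]
Differentiating gives $T'(s)=-(s-\beta)g_a''(s)$, where $g_a''(s)=2L-6s$ for $s>0$. So on $(\beta,\infty)$, $T$ decreases while $s<L/3$ and increases once $s>L/3$.

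\emph{Case $\beta\ge L/3$.} Then $T$ is increasing on $(\beta,1)$, and $T(\beta)=g_a(\beta)>0$, so the inequality holds.

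\emph{Case $\beta<L/3$.} The minimum of $T$ on $[\beta,1]$ is attained at $s=L/3<1$. Using $g_a(L/3)=2L^3/27-aL/3$ and $g_a'(L/3)=L^2/3-a$, I expect
\[
T(L/3)=-\frac{L^3}{27}+\frac{\beta}{3}\,(L^2-3a),\qquad L^2-3a=1-a+a^2>0 .
\]
Hence $T\ge0$ on $(\beta,1)$ if and only if $\beta\ge L^3/(9(1-a+a^2))$.

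To merge the two cases, check that $L^3/(9(L^2-3a))\le L/3$. This is equivalent to $9a\le 2L^2$, which always holds. So in the first case the threshold inequality is automatic, and \eqref{eq:tangent-test} follows in both cases.

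For existence and uniqueness of $a_T$, I would reuse the variable $x=a/L^2\in(0,2/9)$ from Proposition~\ref{prop:thresholds}. It is strictly increasing in $a$, since $dx/da=(1-a)/L^3>0$. Both sides of the threshold are $L$ times functions of $x$:
\[
\beta_a=\frac{L\bigl(2-\sqrt{4-18x}\bigr)}{3},\qquad \frac{L^3}{9(1-a+a^2)}=\frac{L}{9(1-3x)} .
\]
Substitute $y=\sqrt{4-18x}\in(0,2)$, which is decreasing in $a$. Then $1-3x=(2+y^2)/6$, and the condition $\beta_a\ge L^3/(9(1-a+a^2))$ becomes $(2+y^2)(2-y)\ge2$, that is,
\[
p(y):=2-2y+2y^2-y^3\ge0 .
\]
Since $p'(y)=-3y^2+4y-2$ has negative discriminant, $p'<0$. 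So $p$ is strictly decreasing in $y$, hence strictly increasing in $a$. The endpoint values are:
\begin{itemize}
\item as $a\downarrow0$, $y\to2$ and $p(2)=-2<0$;
\item at $a=a_3$, where $9a=L^2$, we get $x=1/9$, $y=\sqrt2$ and $p(\sqrt2)=6-4\sqrt2>0$.
\end{itemize}
This yields a unique root $a_T\in(0,a_3)$, with the tangent condition holding exactly for $a\ge a_T$. (For $a\in[a_3,1/2)$ the same monotonicity applies, since $x$ stays in $(0,2/9)$.)

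For \eqref{eq:tangent-dimension}, I would locate $a_T$ numerically via the real root $y_T\approx1.5437$ of $y^3-2y^2+2y-2=0$. From it, $x_T=(4-y_T^2)/18$, and $a_T$ is the smaller root of $a/(1+a)^2=x_T$. I would then choose explicit rationals $a_-<a_T<a_+$, certified by the sign of $p$ at the corresponding $y$-values. Because $m$ is strictly increasing on $(0,a_3]$ (Proposition~\ref{prop:thresholds}) and given by the closed form $m=\tfrac14+(A^{1/3}+B^{1/3})^3$ there, it suffices to verify
\[
m(a_-)>\tfrac{21}{46},\qquad m(a_+)<\tfrac{11}{24}.
\]
Each of these reduces to comparing cube roots of explicit rationals, which can be done by cubing bracketing rationals.

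The main obstacle is this last step: making the numerical bracketing rigorous with rationals tight enough, since the window $(21/46,11/24)$ is narrow (about $0.4565$ to $0.4583$). The structural equivalence and the uniqueness of $a_T$ are routine once the substitution in $y$ is made.
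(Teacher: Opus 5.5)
Your proof is correct. For the equivalence \eqref{eq:tangent-test} and for the existence and location of $a_T$, it is the paper's argument.
\begin{itemize}
\item You use the same function $T$ with $T'(s)=-(s-\beta_a)(2L-6s)$, the same case split at $L/3$, and the same check $2L^2-9a=(1-2a)(2-a)>0$.
\item Your variable $y=\sqrt{4-18x}$ is simply $2-3h$ with the paper's $h=\beta_a/L$. Under this substitution your $p(y)$ is exactly the paper's $Q(h)=27h^3-36h^2+18h-2$, with the same endpoint values $-2$ and $6-4\sqrt2$.
\end{itemize}

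The real difference is in \eqref{eq:tangent-dimension}. You bracket $a_T$ by rationals $a_\pm$ and bound $A^{1/3}$ and $B^{1/3}$ numerically. The paper works in $x=a/(1+a)^2$ instead.
\begin{itemize}
\item It brackets $x_T\in(7/78,9/100)$ using the cubic $P(x)=1458x^3-972x^2+198x-11$. With $Y=y^2=4-18x$ one has $P(x)=\tfrac14(4+4Y-Y^3)$, which is what you get by squaring away the root in $p(y)=2(1+y^2)-y(2+y^2)$. This also settles your unaddressed issue that $y(a_\pm)$ is irrational.
\item It then eliminates cube roots with the exact identity $(m-\tfrac14-S)^3=27AB\,(m-\tfrac14)$, where $S=A+B$ and $AB$ are rational in $x$.
\item Monotonicity of $t\mapsto(t-S)^3/t$ for $t>S$ turns $m>\tfrac{n-2}{2n}$ into the sign of the polynomial $R_n$ at a rational point.
\end{itemize}

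Your route is valid but heavier. As written it is only a plan: you still have to produce the brackets, and the margins are thin. Numerically $a_T\approx0.11086$, $m(a_T)\approx0.45749$, and $m'\approx1.15$ there, so $a_-$ must exceed roughly $0.11002$.
\begin{itemize}
\item The natural choice $a_-=11/100$ fails, since $m(0.11)\approx0.45650<21/46\approx0.45652$.
\item On the other side, $a_+=1/9$ works, since $m(1/9)\approx0.45778<11/24$. It is the paper's upper comparison point $x=9/100$.
\end{itemize}
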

\begin{proof}
Set $L=1+a$, $H=1-a+a^2$, and
\[
T(s):=g_a(s)-(s-\beta_a)g_a'(s).
\]
Since $T'(s)=-(s-\beta_a)(2L-6s)$,
if $\beta_a<L/3$, the minimum of $T$ on $(\beta_a,1)$
is attained at $L/3$, where
\[
T\left(\frac{L}{3}\right)=\frac{H\beta_a}{3}-\frac{L^3}{27}.
\]
If $\beta_a\ge L/3$, then $T$ is increasing and
$T(\beta_a)=g_a(\beta_a)>0$. Moreover, $3H-L^2=(1-2a)(2-a)>0$,
so $L^3/(9H)<L/3$. These two cases yield
\[
T(s)\ge0\text{ on }(\beta_a,1)
\quad\Longleftrightarrow\quad
\beta_a\ge\frac{L^3}{9H},
\]
proving \eqref{eq:tangent-test}.

To determine $a_T$, put $x=a/L^2$ and $h=\beta_a/L$.
Then it follows from $G_a(\beta_a)=0$ that
\[
x=\frac{4h-3h^2}{6},\qquad 0<h<\frac23.
\]
The threshold equality is equivalent to
\[
Q(h):=27h^3-36h^2+18h-2=0.
\]
Since $Q'(h)=9(9h^2-8h+2)>0$,  $Q(0)=-2$, and $Q\left({2}/{3}\right)=2$,
there is a unique root in $(0,2/3)$. Both $x$ and $h$
increase strictly with $a$. At $a=a_3$, we have
\[
x=\frac19,\qquad h=\frac{2-\sqrt2}{3},
\qquad Q(h)=6-4\sqrt2>0.
\]
Hence $a_T\in(0,a_3)$, and the condition holds precisely
when $a\ge a_T$.

It remains to prove \eqref{eq:tangent-dimension}. At
$a=a_T$, the equality $h=1/[9(1-3x)]$ gives
\[
P(x_T)=0,\qquad x_T=\frac{a_T}{(1+a_T)^2},
\]
where
\[
P(x):=1458x^3-972x^2+198x-11.
\]
Since $P'(x)>0$ on $(0,1/9)$ and $P(7/78)<0<P(9/100)$,
we obtain $7/{78}<x_T<9/{100}$.

For $0<a\le a_3$, write $S=A+B$, where $A,B$ are
defined in Proposition~\ref{prop:minimum}. In terms of
$x=a/(1+a)^2$,
\[
S=\frac{1-3x}{12(1-4x)},\qquad
AB=\frac{x^2(2-9x)}{144(1-4x)^3},
\]
and
\[
\left(m(a)-\frac14-S\right)^3
=27AB\left(m(a)-\frac14\right).
\]
Since $t\mapsto(t-S)^3/t$ is strictly increasing for
$t>S$, direct substitution gives
\[
m(a)>\frac{n-2}{2n}
\quad\Longleftrightarrow\quad R_n(x)<0,
\]
provided $t_n:=(n-4)/(4n)>S$, where
\[
R_n(x)=
[2(n-6)-3(3n-16)x]^3
-81n^2(n-4)x^2(2-9x).
\]
At the two comparison points,
\[
\begin{aligned}
S\left(\frac{7}{78}\right)&=\frac{19}{200}<\frac{19}{92}=t_{23},\qquad
&&R_{23}\left(\frac{7}{78}\right)<0,\\
S\left(\frac{9}{100}\right)&=\frac{73}{768}<\frac5{24}=t_{24},\qquad
&&R_{24}\left(\frac{9}{100}\right)
>0.
\end{aligned}
\]
Thus the corresponding values of $m$ satisfy $m(a_1)>{21}/{46}$ and $m(a_2)<{11}/{24}$,
where $a_1,a_2$ are determined by
\[
\frac{a_1}{(1+a_1)^2}=\frac7{78},\qquad
\frac{a_2}{(1+a_2)^2}=\frac9{100}.
\]
Since $m$ is strictly increasing on $(0,a_3]$ and
$a_1<a_T<a_2$, we conclude \eqref{eq:tangent-dimension}.
\end{proof}

\begin{proof}[Proof of Theorem~\ref{thm:explicit}]
Existence follows from Proposition~\ref{prop:existence}.
To prove uniqueness, suppose that two distinct states
$v_1,v_2\in\mathcal B_{n,k}(a)$ exist in one of the stated
parameter ranges. By Lemma~\ref{lem:energy}, their positive
central amplitudes lie in $(\beta_a,1)$, and $|v_j(r)|\le v_j(0)<1$, $j=1,2$. 
Moreover, Proposition~\ref{prop:thresholds} yields
\[
\left(\frac{G_a}{g_a}\right)'(s)
\ge\frac{n-2}{2},\qquad s\in(\beta_a,1),\quad n=2,3,4.
\]
Since $2\le n\le4$, we have $(n-2)/2\le1$.
Thus Lemma~\ref{lem:extension}, with
$C_0=(n-2)/2$, provides a common odd $C^1$
extension $\widetilde g$ for which both states remain
solutions. {This extension satisfies the structural assumptions of \cite[Theorem~1.1]{CGHY}, as well as \eqref{eq:CGH-all} with $f=\widetilde g$, $F=\widetilde G$ and $\beta=\beta_a$ on $(\beta_a,\infty)$.} 
Since $v_1$ and $v_2$ have the same
number $k$ of zeros and positive central values,
the cited uniqueness theorem implies $v_1\equiv v_2$,
a contradiction.
Finally, Proposition~\ref{prop:thresholds} establishes
the sharpness of $a_3$ and $a_4$ for the corresponding
comparison inequalities. This completes the proof.
\end{proof}

\begin{proof}[Proof of Theorem~\ref{thm:one}]
We first determine the exact parameter ranges for
\eqref{eq:one-criterion}.
By Proposition~\ref{prop:tangent}, the tangent condition
holds precisely when $a\ge a_T$.
Moreover, Proposition~\ref{prop:thresholds} shows that
$m$ is continuous and strictly increasing on $(0,a_3]$,
with $m(a_3)=1/2$, and $m(a)\ge1/2$ for $a\ge a_3$.

For $3\le n\le23$, by \eqref{eq:tangent-dimension}, we have
\[
\frac{n-2}{2n}\le\frac{21}{46}<m(a_T).
\]
Hence both conditions in \eqref{eq:one-criterion}
hold exactly when $a\ge a_T$.
For $n\ge24$,
\[
m(a_T)<\frac{11}{24}
\le\frac{n-2}{2n}<\frac12=m(a_3).
\]
By continuity and strict monotonicity, there exists
a unique $\gamma_n\in(a_T,a_3)$ such that $m(\gamma_n)=(n-2)/(2n)$,
and \eqref{eq:one-criterion} holds exactly when
$a\ge\gamma_n$.
Since $(n-2)/(2n)$ increases strictly to $1/2$,
the sequence $(\gamma_n)$ is strictly increasing
and converges to $a_3$.

It remains to prove uniqueness.
Existence follows from Proposition~\ref{prop:existence}.
Suppose that two distinct states
$v_1,v_2\in\mathcal B_{n,1}(a)$ exist in one
of the stated ranges.
Proposition~\ref{prop:tangent} and
\eqref{eq:one-criterion} yield
\[
g_a(s)\ge(s-\beta_a)g_a'(s),\qquad
\left(\frac{G_a}{g_a}\right)'(s)
\ge\frac{n-2}{2n},
\quad s\in(\beta_a,1).
\]
Since $(n-2)/(2n)<1$, Lemma~\ref{lem:extension}
provides a common odd $C^1$ extension
$\widetilde g$ {satisfying the structural hypotheses of \cite[Theorem~1.2]{CGHY} and both inequalities in \eqref{eq:CGH-one} (with $f=\widetilde g$ and $F=\widetilde G$) on $(\beta_a,\infty)$.}
Both states remain solutions of the extended equation,
have exactly one zero, and are positive at the origin.
Thus \cite[Theorem~1.2]{CGHY} implies
$v_1\equiv v_2$, a contradiction.
Therefore $\mathcal B_{n,1}(a)$ consists of exactly
one state in the stated ranges, which are sharp
for the two comparison conditions
\eqref{eq:one-criterion}.
\end{proof}

\begin{proof}[Proof of Corollary~\ref{cor:small}]
Under the rescaling
\[
v(x)=\frac{a}{1+a}w(\sqrt a\,x),
\qquad
\varepsilon=\frac{a}{(1+a)^2},
\]
the equation $\Delta v+g_a(v)=0$ becomes
\[
-\Delta w+w=|w|w-\varepsilon|w|^2w
\quad\text{in }\mathbb R^n.
\]
This transformation preserves radiality, the number of
zeros, and positivity at the origin. 

{For $n=3,4,5$, we have $1<2<(n+2)/(n-2)$, as required in \cite[Theorem~1.1]{GJL}.}
Hence \cite[Theorem~1.1]{GJL}, with $p=2$ and $q=3$,
gives uniqueness of radial $k$-node bound states for
all sufficiently small $\varepsilon>0$.
Since
\[
\varepsilon'(a)=\frac{1-a}{(1+a)^3}>0,
\qquad \varepsilon(a)\to0\text{ as }a\to0^+,
\]
uniqueness follows for $0<a<a_{n,k}$ with some
$a_{n,k}>0$. Existence is guaranteed by
Proposition~\ref{prop:existence}.
\end{proof}

\vspace{.3cm}

\noindent\textbf{Conflict of interest.}
The authors declare that they have no conflict of interest.

\medskip
\noindent\textbf{Data availability.}
Data sharing is not applicable to this article as no data were created or analyzed in this study.

\end{document}